\renewcommand\H{\mathrm{H}}
\renewcommand\L{\mathrm{L}}
\newcommand\W{\mathrm{W}}
\newcommand\R{\mathbb{R}}
\newcommand\bbL{\mathbb{L}}
\newcommand\bH{\mathbf{H}}
\newcommand\bL{\mathbf{L}}
\newcommand\cC{\mathcal{C}}
\newcommand\cD{\mathcal{D}}
\newcommand\cA{\mathcal{A}}
\newcommand\cT{\mathcal{T}}
\newcommand\cP{\mathcal{P}}
\newcommand\cV{\mathcal{V}}
\renewcommand\t{\mathtt{t}}
\newcommand\bn{\boldsymbol{n}}
\newcommand\bF{\boldsymbol{f}}
\newcommand\bu{\boldsymbol{u}}
\newcommand\bv{\boldsymbol{v}}
\newcommand\br{\boldsymbol{r}}
\newcommand\bs{\boldsymbol{s}}
\newcommand\be{\boldsymbol{e}}
\newcommand\beps{\boldsymbol{\varepsilon}}
\newcommand\bsig{\boldsymbol{\sigma}}
\newcommand\btau{\boldsymbol{\tau}}
\newcommand\bze{\boldsymbol{\zeta}}
\newcommand\bet{\boldsymbol{\eta}}
\newcommand\bgam{\boldsymbol{\gamma}}
\newcommand\bchi{\boldsymbol{\chi}}
\newcommand\kq{\mathfrak{q}}
\newcommand\kp{\mathfrak{p}}
\newcommand\bdiv{\mathop{\mathbf{div}}\nolimits}
\newcommand{\norm}[1]{\left\| #1\right\|}
\newcommand\tr{\mathop{\mathrm{tr}}\nolimits}
\newcommand{\set}[1]{\left\{ #1\right\}}
\newcommand\disp{\displaystyle}
\DeclareMathOperator*{\esssup}{ess\,sup}
\newtheorem{lemma}{Lemma}[section]
\newtheorem{theorem}{Theorem}[section]
\newtheorem{corollary}{Corollary}[section]
\numberwithin{equation}{section}
\numberwithin{figure}{section}
\numberwithin{table}{section}
\date{}
\title{A mixed finite element method with reduced symmetry \\
for the standard  model in linear viscoelasticity
\thanks{This research was partially supported by Spain's Ministry of Economy Project MTM2017-87162-P; by CONICYT-Chile through the project AFB170001 of the PIA Program: Concurso Apoyo a Centros Cient\'ificos y Tecnol\'ogicos de Excelencia con Financiamiento Basal; and by Centro de Investigaci\'on en Ingenier\'ia Matem\'atica (CI$^2$MA), Universidad de Concepci\'on.}}
\author{{\sc Gabriel N. Gatica}\thanks{CI$^2$MA and Departamento de Ingenier\'\i a Matem\' atica,
Universidad de Concepci\' on, Casilla 160-C, Concepci\' on, Chile, email: {\tt ggatica@ci2ma.udec.cl}}\quad {\sc Antonio M\'arquez}\thanks{Departamento de Construcci\'on e Ingenier\'\i a de Fabricaci\'on, Universidad de Oviedo, Oviedo, Espa\~na, e-mail: {\tt amarquez@uniovi.es}.}
\quad
{\sc Salim Meddahi}\thanks{Departamento de Matem\'aticas, Facultad de Ciencias, Universidad de Oviedo, Calvo Sotelo s/n, Oviedo, Espa\~na, e-mail: {\tt salim@uniovi.es}.}}
\begin{document}

\maketitle

\begin{abstract}
\noindent
We introduce and analyze a new mixed finite element method with reduced symmetry for  the standard linear model in viscoelasticity. Following a previous approach employed for linear elastodynamics, the present problem is formulated as a second-order hyperbolic partial differential equation in which, after using the motion equation to eliminate the displacement unknown, the stress tensor remains as the main variable to be found. The resulting variational formulation is shown to be well-posed,  and a class of  $\H(\text{div})$-conforming semi-discrete schemes is proved to be convergent. Then, we use the Newmark trapezoidal rule to obtain an associated fully discrete scheme, whose main convergence results are also established. Finally, numerical examples illustrating the performance of the method are reported.

\end{abstract}
\medskip

\noindent
\textbf{Mathematics Subject Classification.} 65N30, 65M12, 65M15, 74H15
\medskip

\noindent
\textbf{Keywords.}
mixed finite elements, elastodynamics, error estimates

\section{Introduction}  

We are interested in the problem of wave propagation in solids exhibiting a linear viscoelastic behaviour. Viscoelastic materials are characterized by reactions involving a combination of elastic and viscous effects when mechanical loads are applied to them. The  viscoelastic properties of solids are modeled through constitutive equations relating the strain and the stress tensors. In the linear case, it is shown in \cite{gurtin} that these constitutive relations have an integral form, which reflects that the stress depends on the history of the strain evolution, and an equivalent differential form  represented in one dimension by different arrangements  of springs and dashpots. In this paper we are concerned with the standard linear solid model, also known as the Zener model. It consists in a parallel  combination of one spring and one Maxwell component (a serial combination of one spring and one dashpot). It is the simplest model for viscoelasticity that  takes into account important phenomena such as recovery and stress relaxation, see \cite{salencon} for more details.  

The non-local version of the constitutive law can be used to eliminate the stress tensor and  formulate the linear viscoelastic system solely in terms of the displacement field. This approach gives rise to an integro-differential weak formulation  whose mathematical analysis can be found in \cite{Morro}. Most of the numerical work on linear viscoelasticity focused on this formulation. It has been used in various contexts by engineers, in spite of the negative impact that the Volterra integral term have on the computational performance.  An overview of the different numerical techniques used to solve this problem can be found in \cite{Marques}. Convergence of schemes based on continuous and discontinuous Galerkin finite elements in space and quadratures in time have also been explored in \cite{Shaw, Shaw1, riv1} for the displacement formulation. We also refer to \cite{Idesman, Janovsky, riv2} for other studies of numerical methods for linear viscoelasticity. 

In this paper, we are interested in formulations based exclusively on differential equations and relying on the stress tensor as primary unknown. To our knowledge, B\'ecache \textsl{et al.} \cite{Becache} introduced the first mixed formulation for viscoelastic wave problems employing an $\mathrm H(\mathrm{div})$-energy space for the stress. Their numerical scheme combines an explicit time quadrature  with  a space discretization based on the mixed finite element introduced in \cite{Becache0} for linear elastodynamics. The resulting numerical method delivers low order symmetric approximations of the stress on regular cubical grids. Rognes and Winther reinforced  this strategy by analyzing in  \cite{rognes} mixed formulations for the quasi-static Maxwell and Kelvin-Voigt models with a weak symmetry restriction on the stress tensor. This approach gives rise to mixed variational formulations for linear viscoelasticity whose spacial discretizations can be built upon stable families of simplicial finite elements designed for mixed approximations of the elasticity system with reduced symmetry, see for example \cite{abd, ArnoldFalkWinther, Becache0, CGG, GG, stenberg} and the references therein. Recently, this strategy has  been generalized by Lee \cite{lee} for the dynamic standard linear solid model. 

We point out that the articles \cite{Becache, rognes, lee} carry out convergence analyses for semi-discrete schemes leading to $\mathrm L^2$-error estimates for the stress tensor.  Our aim here is to introduce    semi and fully discrete versions of a new mixed formulation for the  standard linear solid model and to prove optimal convergence rates for the stress tensor in the full energy norm, namely, in the  $\mathrm H(\text{div})$-norm. Our approach is based on the mixed formulation introduced in \cite{ggm-JSC-2017} for linear elastodynamics. We show that this formulation can be adapted to deal with our viscoelastic model problem  on general domains, including heterogeneous media and general boundary conditions. We analyze the continuous problem and provide convergence analyses for the semi-discrete and fully discrete problems by using fairly standard discrete energy decay techniques. It is also  worthwhile to mention that, although we only maintain the stress tensor as primary unknown (besides the rotation), accurate approximations of the acceleration field can be  directly obtained from the linear momentum equation. 

This article is structured as follows: Section~\ref{section2} is devoted to notations,  definitions, and basic results that are used throughout the document. In Section~\ref{section3}, we introduce a new mixed variational formulation for the standard linear solid model. Next, we recall in Section~\ref{section4} the main properties of the Arnold-Falk-Winther \cite{ArnoldFalkWinther} family of mixed finite elements and use them to construct in Section~\ref{section5} a space discretization of the variational problem. Then, we  employ a Galerkin procedure to prove the existence of weak solutions. The convergence of the  semi-discrete  problem is carried out in Section~\ref{section6}. We propose in Section~\ref{section7} a fully discrete method based on an implicit Newmark scheme and undertake its   stability and convergence analysis. Finally, we confirm in Section~\ref{section8} the theoretical rates of convergence by showing results obtained from a series of numerical tests.

\section{Notations and preliminary results}\label{section2}
We dedicate this section to provide part of the notations, definitions, and preliminary results that will be employed along the paper. We first denote by $\boldsymbol{I}$ the identity matrix of $\R^{d \times d}$ ($d=2,3$), and by $\mathbf{0}$ the null vector in $\R^d$ or the null tensor in $\R^{d \times d}$. In addition, the component-wise inner product of two matrices $\bsig, \,\btau \in\R^{d \times d}$ is defined by $\bsig:\btau:= \tr( \bsig^{\t} \btau)$, where $\tr(\btau):=\sum_{i=1}^d\tau_{ii}$ and $\btau^{\t}:=(\tau_{ji})$ stand for the trace and the transpose of $\btau = (\tau_{ij})$, respectively. In turn, for $\bsig:\Omega\to \R^{d \times d}$ and $\bu:\Omega\to \R^d$, we set the row-wise divergence $\bdiv \bsig:\Omega \to \R^d$, the  row-wise gradient $\nabla \bu:\Omega \to \R^{d \times d}$, and the strain tensor $\beps(\bu) : \Omega \to \R^{d \times d}$ as
\[
(\bdiv \bsig)_i := \sum_j   \partial_j \sigma_{ij} \,, \quad (\nabla \bu)_{ij} := \partial_j u_i, \quad\text{and}\quad 
\beps(\bu) := \frac{1}{2}\Big\{\nabla\bu+(\nabla\bu)^{\t}\Big\}\,,
\]
respectively. Next, we let $\Omega$ be a polyhedral Lipschitz bounded domain of $\R^d$ $(d=2,3)$, with boundary $\partial\Omega$. Furthermore, for $s\in \mathbb{R}$, $\norm{\cdot}_{s,\Omega}$ stands indistinctly for the norm of the Hilbertian Sobolev spaces $\H^s(\Omega)$, $\bH^s(\Omega):= \H^s(\Omega)^d$ or $\mathbb{H}^s(\Omega):= \H^s(\Omega)^{d \times d}$, with the convention $\H^0(\Omega):=\L^2(\Omega)$. In all what follows, $(\cdot, \cdot)$ stands for the inner product in $\L^2(\Omega)$, $\bL^2(\Omega):=\L^2(\Omega)^d$, $\mathbb{L}^2(\Omega):=\L^2(\Omega)^{d \times d}$, and $\mathfrak{L}^2(\Omega) := \mathbb{L}^2(\Omega) \times \mathbb{L}^2(\Omega)$. We also introduce the Hilbert space  $\mathbb{H}(\bdiv, \Omega):=\big\{\btau\in\mathbb{L}^2(\Omega):\ \bdiv\btau\in\bL^2(\Omega)\big\}$ and denote the corresponding norm $\norm{\btau}^2_{\mathbb H(\bdiv,\Omega)}:=\norm{\btau}_{0,\Omega}^2+\norm{\bdiv\btau}^2_{0,\Omega}$. 
 
Since we will deal with a time-domain problem, besides the Sobolev spaces defined above, we need to introduce spaces of functions acting on a bounded time interval $(0,T)$ and with values in a separable Hilbert space $V$, whose norm is denoted here by $\norm{\cdot}_{V}$. In particular, for $1 \leq p\leq \infty$, $\L^p(V)$ is the space of classes of functions $f:\ (0,T)\to V$ that are B\"ochner-measurable and such that $\norm{f}_{\L^p(V)}<\infty$, with 
\[
\norm{f}^p_{\L^p(V)}:= \int_0^T\norm{f(t)}_{V}^p\, \text{d}t\quad \hbox{for $1\leq p < \infty$},
\quad\hbox{and} \quad \norm{f}_{\L^\infty(V)}:= \esssup_{[0, T]} \norm{f(t)}_V.
\]
We use the notation $\mathcal{C}^0(V)$ for the Banach space consisting of all continuous functions $f:\ [0,T]\to V$. More generally, for any $k\in \mathbb{N}$, $\mathcal{C}^k(V)$ denotes the subspace of $\mathcal{C}^0(V)$ of all functions $f$ with (strong) derivatives $\frac{\text{d}^j f}{\text{d}t^j}$ in $\mathcal{C}^0(V)$ for all $1\leq j\leq k$. In what follows, we will use indistinctly the notations $\dot{f}:= \frac{\text{d} f}{\text{d}t}$ and $\ddot{f} := \frac{\text{d}^2 f}{\text{d}t^2} $ to express the first and second derivatives with respect to the variable $t$. Furthermore,  we will use the Sobolev space
\[
\begin{array}{c}
\W^{1, p}(V):= \left\{f: \ \exists g\in \L^p( V)
\ \text{and}\ \exists f_0\in V\ \text{such that}\
 f(t) = f_0 + \int_0^t g(s)\, \text{d}s\quad \forall t\in [0,T]\right\},
\end{array}
\]
and denote $\H^1(V):= \W^{1,2}(V)$. The space $\W^{k, p}(V)$ is defined recursively  for all $k\in\mathbb{N}$.

On the other hand, given two Hilbert spaces $S$ and $Q$ and a bounded bilinear form $a:S\times Q\to\R$, we denote $\ker(a):=\set{s\in S:\ a(s,q)=0\ \forall\, q\in Q}$.
We say that $a$ satisfies the inf-sup condition for the pair $\{ S,Q\}$, whenever there exists $\kappa>0$ such that
\begin{equation}\label{inf-sup-S-Q}
\sup_{0\neq s\in S}\frac{a(s,q)}{\norm{s}_{ S}}
\ge \kappa \, \norm{q}_{Q}
\qquad\forall\, q\in Q\,.
\end{equation}
We will repeatedly use the well-known fact  that (see \cite{BoffiBrezziFortinBook}) if $a$ satisfies the inf-sup condition for the pair $\{ S, Q\}$ and if $\ell\in S'$ vanishes identically on $\ker(a)$, then there exists a unique $q\in Q$ such that 
\[
a(s,q) = \ell(s) \quad \forall\, s\in S\,.
\]

Throughout the rest of the paper, given any positive expressions $X$ and $Y$ depending on the meshsize  $h$ of a triangulation, the notation $X \,\lesssim\, Y$  means that $X \,\le\, C\, Y$ with a constant $C > 0$ independent of the mesh size $h$ and the time discretization step $\Delta t$.

\section{A mixed formulation of the Zener model}\label{section3}
 
In what follows, $\Omega\subset \mathbb R^d$ ($d=2,3$) is a  polyhedral Lipschitz domain representing a viscoelastic body with a piecewise constant mass density $\rho$. We assume that there exists a polygonal/polyhedral disjoint partition $\big\{\Omega_j,\ j= 1,\ldots,J\big\}$ of  $\bar \Omega$  such that $\rho|_{\Omega_j}:= \rho_j>0$ for  $j=1,\ldots,J$. In addition, we assume that the boundary $\partial \Omega$ admits a disjoint partition $\partial \Omega = \Gamma_D\cup \Gamma_N$, and denote its outward unit normal vector by $\bn$. Then, given a body force $\bF:\Omega\times[0, T] \to \R^d$, we seek a displacement field $\bu:\Omega\times[0, T] \to \R^d$ and a stress tensor $\bsig:\Omega\times[0,T]\to \R^{d\times d}$ satisfying the equations of linear viscoelasticity with a strain-to-stress relationship given by Zener's material law \cite{salencon}:
\begin{align}\label{model}
\begin{split}
\rho\ddot{\bu}-\bdiv\bsig &=\bF \quad \text{in $\Omega\times (0, T]$},
\\[1ex]
 \bsig +  \omega \dot{\bsig} &= \cC \beps(\bu) + \omega \cD \beps(\dot{\bu}) \quad \text{in $\Omega\times (0, T]$},
 \\[1ex]
 \bu &= \mathbf{0}  \quad \text{on $\Gamma_D\times (0, T]$},
 \\[1ex]
 \bsig\bn &= \mathbf{0}  \quad \text{on $\Gamma_N\times (0, T]$},
\end{split} 
\end{align}
where $\cC$ and $\cD$ are two  space-dependent symmetric and positive definite tensors of order 4 and $\omega\in L^\infty(\Omega)$ is such that $\omega(x) \geq \omega_0 >0$ \textit{a.e.} in $\Omega$. In order to obtain a dissipative model, we assume that the tensor $\cD - \cC$, which corresponds to the diffusive part of the elastic model, is also positive definite, cf. \cite{Becache}. In turn, the model problem \eqref{model} is assumed to be subject to the initial conditions
\begin{equation}\label{init1}
	\bu(0) = \bu_0, \quad 
 \dot{\bu}(0) = \bu_1,    \quad \text{and} \quad 
 \bsig(0) = \bsig_0 \qquad\text{in $\Omega$}.
\end{equation}

The starting point of the procedure leading to a mixed formulation for \eqref{model} is  a decomposition of the stress tensor $\bsig$ into a purely elastic component $\bgam:= \cC \beps(\bu)$ and a Maxwell  component $\bze:= \bsig - \bgam$, cf. \cite{Becache, rognes, lee}. With these notations, the second equation of \eqref{model} can be rewritten as
\[
\dot\bsig \,=\, \cD \beps(\dot{\bu}) \,-\, \omega ^{-1} \bze \quad \text{in  $\Omega\times (0, T]$},
\]
which yields
\[
\ddot\bze \,=\, \ddot\bsig - \cC \beps(\ddot{\bu}) \,=\,
(\cD - \cC) \beps(\ddot{\bu})  \,-\, \omega ^{-1} \dot\bze \quad \text{in  $\Omega\times (0, T]$}.
\]
In this way, defining $\cA := \cC^{-1}$ and $\cV:= (\cD - \cC)^{-1}$, we find that  problem \eqref{model} can be stated as follows:
 \begin{equation}\label{split123}
 \begin{array}{rcll}
 \rho\ddot{\bu}-\bdiv(\bze+\bgam) &=& \bF & \quad\text{in $\Omega\times (0, T]$}, 
 \\[1ex]
 \cV \ddot{\bze} + \tfrac{1}{\omega} \cV \dot{\bze} &=& \beps(\ddot{\bu}) & \quad \text{in $\Omega\times (0, T]$}, 
 \\[1ex]
 \cA \ddot{\bgam} &=& \beps(\ddot{\bu}) & \quad \text{in $\Omega\times (0, T]$}.
\end{array}
\end{equation}
The essential boundary condition on $\Gamma_N$ requires the introduction of the closed 
subspace of $\mathbb{H}(\bdiv, \Omega)$ given by 
\[
\mathbb W := \set{\btau\in \mathbb{H}(\bdiv, \Omega); \quad 
	\left\langle\btau\bn,\bv\right\rangle_{\partial\Omega}= 0 
	\quad \text{$\forall\bv\in \bH^{1/2}(\partial\Omega)$,\, $\bv|_{\Gamma_D} = \mathbf{0}$}},
\]
where $\left\langle\cdot,\cdot\right\rangle_{\partial\Omega}$ stands for the duality pairing 
between $\bH^{-1/2}(\partial\Omega)$  and $\bH^{1/2}(\partial\Omega)$ with respect to the $\bL^2(\partial\Omega)$-inner product. In the sequel, we use the compact notations $\kp:=(\bze, \bgam)$ and $\kq := (\btau, \bet)$ to denote elements from $\mathfrak{L}^2(\Omega)$ and set $\kq^+:= \btau + \bet$. We also introduce the $\mathfrak{L}^2(\Omega)$-orthogonal projection $\pi_1\kq:= (\btau, \mathbf 0)$ onto $\mathbb L^2(\Omega)\times \{\mathbf 0\}$.  The energy space corresponding to the variational formulation of \eqref{split123} is given by
\[
 \mathfrak{S}:= \Big\{\kq \in \mathfrak{L}^2(\Omega):\ \kq^+ \in \mathbb W  \Big\}. 
\]
It is endowed with the Hilbertian norm $\norm{\kq}^2_{\mathfrak{S}}:= \norm{\kq}_{0,\Omega}^2+ \norm{\bdiv \kq^+}^2_{0,\Omega}$. We notice that the embeddings $\mathbb W\times \mathbb W  \hookrightarrow \mathfrak{S}   \hookrightarrow \mathfrak{L}^2(\Omega)$ are continuous. On the other hand, we define the space of symmetric tensors with square integrable entries $\mathbb{L}^2_{\text{sym}}(\Omega):= \big\{\btau \in \mathbb{L}^2(\Omega):\ \btau = \btau^{\t}\big\}$, and denote by $\mathbb{Q}:= \big\{\btau \in \mathbb{L}^2(\Omega):\ \btau = -\btau^{\t}\big\}$ its orthogonal complement in $\mathbb{L}^2(\Omega)$. We also  let $\mathfrak L_{\text{sym}}^2(\Omega) := \set{\kq \in \mathfrak L^2(\Omega):\, \kq^+ \in \mathbb{L}^2_{\text{sym}}(\Omega)}$ and consider the closed subspace $\mathfrak{S}_{\text{sym}} := \mathfrak{S}\cap\mathfrak L_{\text{sym}}^2(\Omega)$ of $\mathfrak{S}$. 
\begin{lemma}\label{density}
The embedding $\mathbb W \times \mathbb W  \hookrightarrow \mathfrak S$ is dense.
\end{lemma}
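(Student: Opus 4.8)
The plan is to exhibit, for an arbitrary $\kq = (\btau, \bet) \in \mathfrak{S}$, an approximating sequence in $\mathbb{W}\times\mathbb{W}$. First note that the inclusion $\mathbb{W}\times\mathbb{W}\subset\mathfrak{S}$ is clear, since $\mathbb{W}$ is a linear subspace of $\mathbb{H}(\bdiv,\Omega)$ and hence $\btau+\bet\in\mathbb{W}$ whenever both components lie in $\mathbb{W}$. The crucial observation is that for $\kq\in\mathfrak{S}$ the sum $\kq^+=\btau+\bet$ already belongs to $\mathbb{W}$ and carries the entire $\bdiv$-contribution to the norm $\norm{\cdot}_{\mathfrak{S}}$; the only obstruction to membership in $\mathbb{W}\times\mathbb{W}$ is therefore concentrated in the ``difference'' $\btau-\bet$, which is a priori merely an element of $\mathbb{L}^2(\Omega)$.

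Accordingly, I would change variables by writing $\btau=\tfrac12(\kq^++\bchi)$ and $\bet=\tfrac12(\kq^+-\bchi)$, where $\bchi:=\btau-\bet\in\mathbb{L}^2(\Omega)$. Because $\kq^+\in\mathbb{W}$ and $\mathbb{W}$ is a subspace, one has $\btau\in\mathbb{W}$ and $\bet\in\mathbb{W}$ simultaneously if and only if $\bchi\in\mathbb{W}$. This reduces the whole statement to the density of $\mathbb{W}$ in $\mathbb{L}^2(\Omega)$: it suffices to approximate $\bchi$ in the $\mathbb{L}^2(\Omega)$-norm by tensors $\bchi_n\in\mathbb{W}$ and then to set $\btau_n:=\tfrac12(\kq^++\bchi_n)$, $\bet_n:=\tfrac12(\kq^+-\bchi_n)$, $\kq_n:=(\btau_n,\bet_n)\in\mathbb{W}\times\mathbb{W}$.

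The gain of keeping the sum fixed is that $\kq_n^+=\kq^+$ for every $n$, so that $\bdiv(\kq_n^+-\kq^+)=\mathbf{0}$ and the divergence term drops out of $\norm{\kq_n-\kq}_{\mathfrak{S}}$ entirely; what remains satisfies $\norm{\kq_n-\kq}_{\mathfrak{S}}^2=\norm{\kq_n-\kq}_{0,\Omega}^2=\tfrac12\norm{\bchi_n-\bchi}_{0,\Omega}^2$. Thus convergence in $\mathfrak{S}$ follows at once from convergence in $\mathbb{L}^2(\Omega)$. The required density of $\mathbb{W}$ in $\mathbb{L}^2(\Omega)$ is elementary, since every smooth tensor field with compact support in $\Omega$ trivially satisfies the weak boundary condition defining $\mathbb{W}$ and has an $\bL^2(\Omega)$ divergence, whence $C^\infty_0(\Omega)^{d\times d}\subset\mathbb{W}$ is already dense in $\mathbb{L}^2(\Omega)$.

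In this argument there is no genuine obstacle once the decomposition into sum and difference is in place: the potentially delicate point, namely reproducing the $\mathbb{H}(\bdiv,\Omega)$ regularity and the boundary condition encoded in $\mathbb{W}$, is sidestepped because the sum $\kq^+$ is never perturbed and the difference is free to be approximated in the ambient $\mathbb{L}^2(\Omega)$ topology. The only care needed is to verify that $C^\infty_0(\Omega)^{d\times d}$ (or any other convenient dense subset of $\mathbb{L}^2(\Omega)$ contained in $\mathbb{W}$) indeed lies in $\mathbb{W}$, which is immediate from the definition of the duality pairing $\langle\cdot,\cdot\rangle_{\partial\Omega}$.
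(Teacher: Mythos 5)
Your proof is correct, but it takes a genuinely different route from the paper's. The paper argues by duality: it takes $\kp=(\bze,\bgam)\in\mathfrak{S}$ that is $\mathfrak{S}$-orthogonal to $\mathbb{W}\times\mathbb{W}$, tests first with $\kq=(\kp^+,\kp^+)$ to force $\kp^+=\mathbf{0}$ in $\mathbb{H}(\bdiv,\Omega)$, then with $\pi_1\kq=(\btau,\mathbf{0})$ to force $\bze=\mathbf{0}$, and concludes density from the triviality of the orthogonal complement. You instead exhibit an explicit approximating sequence, keeping the sum $\kq^+\in\mathbb{W}$ fixed and approximating only the difference $\bchi=\btau-\bet$ in $\mathbb{L}^2(\Omega)$ by elements of $\mathbb{W}$ (e.g.\ smooth compactly supported tensors, which lie in $\mathbb{W}$ since their normal trace vanishes, regardless of how $\partial\Omega$ is partitioned). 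Both arguments ultimately rest on the same two structural facts---the sum carries the entire divergence contribution to $\norm{\cdot}_{\mathfrak{S}}$, and $\mathbb{W}$ is dense in $\mathbb{L}^2(\Omega)$, a fact the paper uses implicitly in the step concluding $\bze=\mathbf{0}$ from $(\bze,\btau)=0$ for all $\btau\in\mathbb{W}$---but your version is constructive and buys slightly more: the approximants satisfy $\kq_n^+=\kq^+$ exactly, so the divergence error is identically zero and convergence in $\mathfrak{S}$ reduces to a pure $\mathbb{L}^2$ statement, as your identity $\norm{\kq_n-\kq}_{\mathfrak{S}}^2=\tfrac12\norm{\bchi_n-\bchi}_{0,\Omega}^2$ correctly records; the paper's annihilator argument, by contrast, is marginally shorter and avoids writing the sequence down.
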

\begin{proof}
	Let $\kp:=(\bze, \bgam)\in \mathfrak S$ be such that
	\begin{equation}\label{step}
		(\kp, \kq)  + \big(\bdiv\kp^+, \bdiv\kq^+\big) = 0 \quad \forall \kq:=(\btau,\bet)\in  \mathbb W\times \mathbb W.
	\end{equation}
	Taking $\btau = \bet = \kp^+\in \mathbb W$ in \eqref{step} we deduce that $\kp^+ := \bze + \bgam=\mathbf 0$ in $\mathbb{H}(\bdiv, \Omega)$. Using this fact in \eqref{step} and testing with $\pi_1\kq:= (\btau, \mathbf 0)$ proves that $\bze = \mathbf 0$ and the result follows. 
\end{proof}

We obtain a variational formulation of \eqref{split123} by considering an arbitrary $\kq =(\btau,\bet) \in \mathfrak S$ and by testing the constitutive laws (second and third rows of \eqref{split123}) with $\btau$ and $\bet$, respectively. Adding the resulting equations,  we obtain 
\begin{equation}\label{const+}
( \cV \ddot{\bze} + \tfrac{1}{\omega} \cV \dot{\bze},\btau) + (\cA \ddot{\bgam},\bet) = (\beps(\ddot{\bu}), \btau + \bet) = \big(\nabla \ddot\bu - \ddot \br, (\btau + \bet)\big),
\end{equation}
where the skew symmetric tensor $\br := \frac{1}{2}\big\{\nabla\bu-(\nabla\bu)^{\t}\big\}$ is the rotation. Next, integrating by parts in the right hand-side of \eqref{const+} and employing the Dirichlet boundary condition $\bu = \mathbf 0$ on $\Gamma_D \times [0,T]$ together with the fact that $\kq\in \mathfrak S$, we obtain
\begin{equation}\label{var0}
( \cV \ddot{\bze} + \tfrac{1}{\omega} \cV \dot{\bze},\btau) + (\cA \ddot{\bgam},\bet) =   
- \, \big(\ddot{\bu}, \bdiv(\btau + \bet)\big) - (\ddot \br,\btau + \bet).
\end{equation}
Now, we use the  motion equation (first row of \eqref{split123}) to eliminate the displacement field from \eqref{var0}. Indeed, substituting back $\ddot{\bu} = \rho^{-1}(\bF + \bdiv\bsig)$ into \eqref{var0} we  end up with 
\begin{equation}\label{var1}
A\big(\ddot\kp,\kq\big) +  A\big(\tfrac{1}{\omega}\pi_1\dot\kp, \kq\big) +  (\ddot \br,\kq^+)  + \big(\bdiv\kp^+ , \bdiv\kq^+ \big)_\rho  = - \big(\bF , \bdiv\kq^+ \big)_\rho,
\end{equation}
where $(\bu, \bv)_{\rho} := (\tfrac{1}{\rho}\bu, \bv)$ for all $\bu, \bv \in \mathbf L^2(\Omega)$, and 
\[
A\big( \kp,\kq\big) := (\cV \bze, \btau) + (\cA\bgam, \bet), \quad \forall \kp=(\bze,\bgam),\, \kq=(\btau,\bet) \in \mathfrak{L}^2(\Omega).
\]

As a consequence of our hypotheses on $\cC$ and $\cD$, the bilinear form $A$ is symmetric, 
bounded and coercive, which means that there exist positive constants $M$ and $\alpha$, depending only on $\cC$ and $\cD$, such that
\begin{equation}\label{contA}
\big| A( \kp,\kq) \big| \leq M \| \kp \|_{0,\Omega} 
\|\kq \|_{0,\Omega} \qquad \forall \, \kp, \kq \in \mathfrak{L}^2(\Omega),
\end{equation}
and 
\begin{equation}\label{ellipA}
 A( \kq,\kq) \geq \alpha  
\|\kq \|^2_{0,\Omega} \qquad \forall  \,\kq \in \mathfrak{L}^2(\Omega).
\end{equation}

Taking into account these notations and \eqref{var1}, we deduce that, given $\bF\in \L^{1}(\bL^2(\Omega))$, 
the mixed variational formulation of our problem reads as follows:  Find  $\kp\in \L^{\infty}(\mathfrak{S}) \cap \W^{1,\infty}(\mathfrak{L}^2(\Omega))$ and $\br \in \W^{1,\infty}(\mathbb{Q})$  such that
\begin{align}\label{varFormR1-varFormR2}
\begin{array}{rcll}
\frac{\text{d}}{\text{d}t}  \Big\{ A\big(\dot\kp +  \tfrac{1}{\omega} \pi_1\kp, \kq\big)  +(\dot\br, \kq^+)\Big\}+ \big(\bdiv\kp^+ , \bdiv\kq^+\big)_\rho
&= &-\big( \bF, \bdiv\kq^+\big)_\rho, & \forall \kq \in \mathfrak{S}
\\[1ex]
 (\bs,\kp^+) &=& 0, & \forall \bs\in \mathbb{Q},
 \end{array}
\end{align} 
and such that the initial conditions 
\begin{align}\label{initial-R1-R2}
\begin{split}
\kp(0) &= \kp_0 = (\bze_0, \bgam_0) \in \mathfrak{S}_{\text{sym}},
\quad  \dot\kp(0) = \kp_1 = (\bze_1, \bgam_1) \in \mathfrak{L}_{\text{sym}}^2(\Omega),
\\[1ex]
\br(0)&= \br_0\in \mathbb{Q},\qquad  \dot\br(0) = \br_1 \in \mathbb{Q} \,,
\end{split}
\end{align}
are satisfied with
\begin{align}\label{initcompatible}
	\begin{split}
		\bgam_0 &:= \cC\beps(\bu_0), \quad \bgam_1 :=  \cC\beps(\bu_1), \quad \bze_0 := \bsig_0 - \bgam_0,\quad \bze_1 := \cD \beps(\bu_1) - \bgam_1 - \omega^{-1} \bze_0
\\[1ex]
\br_0 &:= \nabla \bu_0 -  \beps(\bu_0),\quad\text{and}\quad \br_1 := \nabla \bu_1 -  \beps(\bu_1).
	\end{split}
\end{align}  

\section{The Arnold-Falk-Winther mixed finite element}\label{section4}

We consider  shape regular meshes $\mathcal{T}_h$ that subdivide the domain $\bar \Omega$ into  triangles/tetrahedra $K$ of diameter $h_K$. The parameter $h:= \max_{K\in \cT_h} \{h_K\}$ represents the mesh size of $\cT_h$.  In what follows, we assume that $\mathcal{T}_h$ is compatible with the partition $\bar\Omega = \cup_{j= 1}^J \bar{\Omega}_j$, i.e., 
\[
\bar{\Omega}_j = \cup \set{K\in\cT_h; \quad  K\subset \bar{\Omega}_j} \quad \forall j=1,\cdots, J.
\]
Hereafter, given an integer $m\geq 0$, the space of piecewise polynomial functions of degree at most $m$ relatively to $\cT_h$ is denoted by  
\[
 \cP_m(\cT_h) :=\set{ v\in \L^2(\Omega); \quad v|_K \in \cP_m(K),\quad \forall K\in \cT_h }. 
\]
For $k\geq 1$, the finite element spaces
\[
 \mathbb{W}_h := \cP_k(\cT_h)^{ d\times d} \cap \mathbb W, \qquad \mathbb{Q}_h := \cP_{k-1}(\cT_h)^{d\times d} \cap \mathbb{Q} \qquad \text{and} \qquad \mathbf{U}_h := \cP_{k-1}(\cT_h)^{d}
\]
correspond to the Arnold-Falk-Winther family introduced in \cite{ArnoldFalkWinther} for the steady elasticity problem. 

We notice that due to the  embedding $\mathbb W\times \{\mathbf 0\} \hookrightarrow \mathfrak{S}$, the inf-sup condition satisfied (cf. \cite{abd, BoffiBrezziFortin}) by the bilinear form $\big(\btau, (\bs, \bv) \big)\mapsto (\bs,\btau) + (\bv,\bdiv \tau)$ for the pair $\{\mathbb W,   \mathbb{Q} \times\bL^2(\Omega)\}$ (cf. \eqref{inf-sup-S-Q}) implies immediately that  there exists $\beta>0$ such that 
\begin{equation}\label{InfSup}
\sup_{\kq \in \mathfrak S} 
\frac{(\bs,\kq^+) \,+\, \big(\bv,\bdiv \kq^+\big)}{\norm{\kq}_{\mathfrak S}}
\,\ge\, \beta \Big\{ \norm{\bs}_{0,\Omega} + \norm{\bv}_{0,\Omega} \Big\},\quad \forall (\bs,\bv) \in \mathbb{Q} \times\bL^2(\Omega).
\end{equation}
Similarly, it is shown in \cite{ArnoldFalkWinther} that $\big(\btau, (\bs, \bv) \big)\mapsto (\bs,\btau) + (\bv,\bdiv \tau)$ satisfies a uniform inf-sup condition for the pair $\{\mathbb W_h,   \mathbb{Q}_h \times\mathbf U_h\}$. Therefore, if we let $\mathfrak S_h:= \mathbb{W}_h \times \mathbb{W}_h \subset \mathfrak{S}$, the embedding $\mathbb W_h\times \{\mathbf 0\} \hookrightarrow \mathfrak{S}_h $ implies the existence of $\beta^*>0$, independent of $h$, such that 
\begin{equation}\label{discInfSup}
\sup_{\kq \in \mathfrak S_h} 
\frac{(\bs,\kq^+) \,+\, \big(\bv,\bdiv \kq^+\big)}{\norm{\kq}_{\mathfrak S}}
\,\ge\, \beta^* \Big\{ \norm{\bs}_{0,\Omega} + \norm{\bv}_{0,\Omega} \Big\},\quad \forall (\bs,\bv) \in \mathbb{Q}_h \times\mathbf{U}_h.
\end{equation} 

We recall that the tensorial version $\Pi_h:  \mathbb H^1(\Omega)\cap \mathbb W \to \mathbb{W}_h$  of the 
BDM-interpolation operator satisfies the following classical error estimate, see \cite[Proposition 2.5.4]{BoffiBrezziFortinBook},
\begin{equation}\label{asymp}
 \norm{\btau - \Pi_h \btau}_{0,\Omega} \leq C h^{m} \norm{\btau}_{m,\Omega} \qquad \forall \btau \in \mathbb H^{m}(\Omega) \quad \text{with $1 \leq m \leq k+1$}.
\end{equation}
Moreover, if $\bdiv \btau \in \bH^k(\Omega)$, the commuting diagram property implies that   
\begin{equation}\label{asympDiv}
 \norm{\bdiv (\btau - \Pi_h \btau) }_{0,\Omega} = \norm{\bdiv \btau -  U_h \bdiv \btau }_{0,\Omega} 
 \leq C h^{m} \norm{\bdiv\btau}_{m,\Omega}\quad \text{for $0 \leq m \leq k$},
\end{equation}   
where $U_h$ is the orthogonal projection from $(\bL^2(\Omega), \norm{\cdot}_{0,\Omega})$ onto $\mathbf{U}_h$. 

In order to facilitate our forthcoming analysis, we now introduce an auxiliary operator $\varXi_h:\ \mathfrak{S} \to \mathfrak{S}_h$, whose construction is adapted from  \cite[Lemma 2.1]{Arnold-Lee}. More precisely, for each $\kp \in \mathfrak{S}$ we define $\varXi_h\kp := \widetilde\kp_h$, where $(\widetilde\kp_h, \widetilde\br_h, \widetilde\bu_h)\in \mathfrak{S}_h \times \mathbb{Q}_h \times \mathbf U_h$ is the solution of 
\begin{align}\label{Xih}
\begin{split}
\big(\widetilde \kp_h, \widetilde \kq\big) + (\widetilde\br_h, \kq^+) 
+ ( \widetilde\bu_h, \bdiv \kq^+ )_\rho  &= \big( \kp , \kq\big) \quad \forall \, \kq \in \mathfrak{S}_h,\\[1ex]
(\bs, \widetilde\kp^+_h) + (\bv,\bdiv \widetilde \kp^+_h)_\rho &= (\bs, \kp^+) + \big(\bv, \bdiv\kp^+ \big)_\rho \quad \forall \,(\bs,\bv)\in \mathbb{Q}_h\times \mathbf U_h.
\end{split}
\end{align}

Let $\mathfrak{S}_{\text{sym},h} := \big\{ \kq\in \mathfrak{S}_h:\ (\bs,\kq^+) = 0 \quad \forall \,\bs \in \mathbb{Q}_h \big\}$ be the kernel of the bilinear form $\mathfrak{S}_h\times \mathbb{Q}_h \ni (\kq, \bs) \mapsto (\bs , \kq^+)$. It is important to realize that $\mathfrak{S}_{\text{sym},h}$ is not  a subspace of $\mathfrak{S}_{\text{sym}}$.  We denote by $\mathfrak{K}$ and $\mathfrak{K}_{h}$ the kernels of the bilinear form $\mathfrak S \times (\mathbb Q \times \mathbf L^2(\Omega)) \ni \big(\kq, (\bs, \bv) \big)\mapsto (\bs,\kq^+) + (\bv,\bdiv \kq^+)$ and its restriction to $\mathfrak S_h \times (\mathbb Q_h\times \mathbf U_h)$, respectively, that is
\[
\mathfrak{K} := \big\{ \kq\in \mathfrak{S}_{\text{sym}}: \  \bdiv \kq^+ = \mathbf 0 \big\},
\quad
\text{and}
\quad
\mathfrak{K}_{h} := \big\{ \kq\in \mathfrak{S}_{\text{sym,h}}:\  \bdiv \kq^+ = \mathbf 0 \big\}\,.
\]
Then, the fact that $(\kq,\kq) = \norm{\kq}_{0,\Omega}^2 = \norm{\kq}^2_{\mathfrak S}$ for all $\kq \in \mathfrak{K}$, the inf-sup condition \eqref{InfSup}, and the Babu\v{s}ka-Brezzi theory, guarantee that the continuous counterpart of problem \eqref{Xih} is well-posed, whose unique solution is easily seen to be $(\kp, \mathbf 0, \mathbf 0) \in \mathfrak S \times \mathbb Q \times \mathbf L^2(\Omega)$. In turn, noting that there certainly holds $(\kq,\kq) = \norm{\kq}^2_{\mathfrak S}$ for all $\kq \in \mathfrak{K}_h$ as well, and employing now the inf-sup condition \eqref{discInfSup} and the discrete Babu\v{s}ka-Brezzi theory, we deduce that problem \eqref{Xih} is well-posed (uniformly in $h$). Moreover, C\'ea's estimate between $(\kp, \mathbf 0, \mathbf 0)$ and $(\widetilde\kp_h, \widetilde\br_h, \widetilde\bu_h)$ implies the following approximation property for $\varXi_h$:
\begin{equation}\label{XihStab}
	\norm{\kp - \varXi_h\kp}_{\mathfrak S} \lesssim
\inf_{\kq_h\in \mathfrak{S}_h}\norm{\kp-\kq_h}_{\mathfrak{S}} \quad \forall \kp\in \mathfrak S.
\end{equation}
Moreover,  taking into account  the compatibility of $\mathcal{T}_h$ with the partition $\bar\Omega = \cup_{j= 1}^J \bar{\Omega}_j$, it turns out that  $\varXi_h$ satisfies by construction the commuting diagram property 
\begin{equation}\label{div0}
\tfrac{1}{\rho}\bdiv (\varXi_h\kp)^+ = U_h (\tfrac{1}{\rho} \bdiv\kp^+), \quad \forall \kp \in \mathfrak S.
\end{equation}

It is important to stress in advance that the analysis that will follow also holds true for  most other known mixed finite elements \cite{abd,  CGG, GG, stenberg} for the steady elasticity problem  with reduced symmetry. However, for the sake of brevity we are restricting our choice of finite element examples to the Arnold-Falk-Winther  family \cite{ArnoldFalkWinther}.

\section{Well-posedness of the continuous problem}\label{section5}

We consider the following semi-discrete counterpart of  \eqref{varFormR1-varFormR2}-\eqref{initial-R1-R2}: Find $\kp_h\in \cC^1(\mathfrak{S}_h)$ and $\br_h\in \cC^1(\mathbb{Q}_h)$ solving 
\begin{equation}\label{varFormR1-varFormR2-h}
\begin{array}{rcll}
\dfrac{\text{d}}{\text{d}t}  \Big\{ A\big( \dot\kp_h + \tfrac{1}{\omega}\pi_1\kp_h , \kq\big)  + (\dot{\br}_h,\kq^+)  \Big\}
+ \big( \bdiv\kp_h^+,  \bdiv \kq^+ \big)_\rho \hspace{-3pt}& = &\hspace{-3pt} -\big(\bF,  \bdiv \kq^+ \big)_\rho, &
\hspace{-5pt}\forall \kq\in \mathfrak{S}_h
\\[1ex]
 (\bs, \kp_h^+) \hspace{-3pt}&=&\hspace{-3pt} 0, &\hspace{-5pt} \forall \bs\in \mathbb{Q}_h. 
 \end{array}
 \end{equation}
We impose to problem \eqref{varFormR1-varFormR2-h} the initial conditions 
\begin{equation}\label{initial-R1-R2-h}
\kp_h(0) = \kp_{0,h} \,,
\quad  \dot{\kp}_h(0) = \kp_{1,h}, \quad \br_h(0) = Q_h\br_{0}, \quad \dot\br_h(0)= Q_h\br_{1},
\end{equation}
where $\kp_{0,h}$ is the $\mathfrak{S}$-orthogonal projection of $\kp_0$ onto $\mathfrak{S}_{\text{sym},h}$, $\kp_{1,h}$ is the $\mathfrak L^2(\Omega)$-orthogonal projection of $\kp_1$ onto $\mathfrak{S}_{\text{sym},h}$ and $Q_h$ is 
the $\bbL^2(\Omega)$-orthogonal projector  onto $\mathbb{Q}_h$. 
It is clear that the component $\kp_h$ of problem \eqref{varFormR1-varFormR2-h}-\eqref{initial-R1-R2-h} 
solves the following reduced formulation: Find $\kp_h\in \cC^1(\mathfrak{S}_{\text{sym},h})$  
satisfying $\kp_h(0) = \kp_{0,h}$, $\dot\kp_h(0) = \kp_{1,h}$, and such that 
\begin{equation}\label{varForm-h}
\dfrac{\text{d}}{\text{d}t}  A\big( \dot\kp_h + \tfrac{1}{\omega}\pi_1 \kp_h,\kq\big) +  \big( \bdiv \kp_h^+,  \bdiv \kq^+ \big)_\rho = - \big( \bF,  \bdiv \kq^+ \big)_\rho,\quad \forall \kq\in \mathfrak{S}_{\text{sym},h}.
\end{equation}
In this regard, the unique solvability of \eqref{varForm-h} is obtained by writing the problem in the form of a first order system of ODEs and applying the classical Cauchy-Lipschitz-Picard theorem. 

In the following result, we  obtain stability estimates for the solution $\kp_h(t)$ in terms of the energy functional $\mathcal{E}:\, W^{1,\infty}(\mathfrak{S}) \to \L^\infty((0,T))$  defined by
\begin{equation}\label{defE}
\mathcal{E}\big(\kq\big)(t):= \frac{1}{2} A\big(\dot\kq(t),\dot\kq(t)\big) + \frac{1}{2}  
\big( \bdiv \kq^+(t), \bdiv \kq^+(t)\big)_\rho.
\end{equation}

\begin{theorem}
Assume that $\bF \in \W^{1,1}(\bL^2(\Omega))$. Then,  problem \eqref{varFormR1-varFormR2-h}-\eqref{initial-R1-R2-h} admits a unique solution  satisfying 
\begin{equation}\label{AprioriBound_h}
\max_{t\in [0, T]}\mathcal{E}(\kp_h)^{1/2}(t)  + \max_{t\in [0, T]}\norm{\dot{\br}_h}_{0,\Omega} \lesssim \norm{\bF}_{\W^{1,1}(\bL^2(\Omega))} + \norm{\kp_0}_{\mathfrak{S}} + \norm{\kp_1}_{0,\Omega}.
\end{equation}
\end{theorem}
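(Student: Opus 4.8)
The statement has two parts, unique solvability and the bound \eqref{AprioriBound_h}, and I would treat the solvability as essentially done by the reduction already set up in the text. Choosing bases of $\mathfrak{S}_h$ and $\mathbb{Q}_h$, the discrete inf-sup condition \eqref{discInfSup} lets me eliminate $\br_h$ and rewrite the reduced equation \eqref{varForm-h} as a linear second-order ODE system whose mass matrix comes from $A$ restricted to $\mathfrak{S}_{\text{sym},h}$ and is invertible by the coercivity \eqref{ellipA}. The Cauchy--Lipschitz--Picard theorem then yields a unique $\kp_h\in\mathcal{C}^1(\mathfrak{S}_{\text{sym},h})$, and $\dot\br_h$ (hence $\br_h$, using $\dot\br_h(0)=Q_h\br_1$ and $\br_h(0)=Q_h\br_0$) is recovered uniquely from the first row of \eqref{varFormR1-varFormR2-h} via \eqref{discInfSup}. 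The real content is the a priori bound, which I would obtain by a discrete energy argument.

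The core is an energy identity. Differentiating the time derivative in \eqref{varForm-h} (legitimate because $A$, $\omega$, $\pi_1$ and $\kq$ are time-independent, and $\kp_h$ has enough time regularity as the solution of a linear ODE system) gives $A\big(\ddot\kp_h+\tfrac1\omega\pi_1\dot\kp_h,\kq\big)+(\bdiv\kp_h^+,\bdiv\kq^+)_\rho=-(\bF,\bdiv\kq^+)_\rho$ for all $\kq\in\mathfrak{S}_{\text{sym},h}$. Testing with $\kq=\dot\kp_h(t)\in\mathfrak{S}_{\text{sym},h}$ and using the symmetry of $A$ together with the commutation of $\bdiv$ and $\tfrac{\mathrm d}{\mathrm dt}$, the left-hand side collapses to
\[
\tfrac{\mathrm d}{\mathrm dt}\mathcal{E}(\kp_h)(t)+\big(\tfrac1\omega\cV\dot\bze_h,\dot\bze_h\big),
\]
where $\mathcal{E}$ is as in \eqref{defE}. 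Since $\cV$ is SPD and $\omega\ge\omega_0>0$, the second term is a nonnegative dissipation and may be discarded. Integrating in time, the only delicate contribution is the forcing term $-(\bF,\bdiv\dot\kp_h^+)_\rho$, in which $\bF$ is paired against the time derivative of $\bdiv\kp_h^+$, a quantity sitting inside $\mathcal{E}$ itself; I would move the derivative off it by integration by parts in time,
\[
-\!\int_0^t(\bF,\bdiv\dot\kp_h^+)_\rho=-(\bF(t),\bdiv\kp_h^+(t))_\rho+(\bF(0),\bdiv\kp_h^+(0))_\rho+\!\int_0^t(\dot\bF,\bdiv\kp_h^+)_\rho,
\]
which is exactly where the hypothesis $\bF\in\W^{1,1}(\bL^2(\Omega))$ is used (also to evaluate $\bF(t),\bF(0)$ pointwise through $\W^{1,1}(\bL^2(\Omega))\hookrightarrow\mathcal{C}^0(\bL^2(\Omega))$).

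Setting $E_*:=\max_{[0,T]}\mathcal{E}(\kp_h)$ and bounding each term by Cauchy--Schwarz, using $\norm{\bdiv\kp_h^+(s)}_{0,\Omega}\lesssim\mathcal{E}(\kp_h)^{1/2}(s)$ and the equivalence of $(\cdot,\cdot)_\rho$ with the $\bL^2$ product, I reach $E_*\lesssim \mathcal{E}(\kp_h)(0)+\norm{\bF}_{\W^{1,1}(\bL^2(\Omega))}E_*^{1/2}+\norm{\bF}_{\W^{1,1}(\bL^2(\Omega))}\mathcal{E}(\kp_h)(0)^{1/2}$; a Young's inequality absorbs the $E_*^{1/2}$ term and yields $E_*^{1/2}\lesssim\mathcal{E}(\kp_h)(0)^{1/2}+\norm{\bF}_{\W^{1,1}(\bL^2(\Omega))}$. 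Here $\mathcal{E}(\kp_h)(0)$ is controlled by the data: since $\dot\kp_h(0)=\kp_{1,h}$ and $\kp_h^+(0)=\kp_{0,h}^+$ with $\kp_{1,h},\kp_{0,h}$ orthogonal projections of $\kp_1,\kp_0$, continuity \eqref{contA} and contractivity of the projections give $A(\kp_{1,h},\kp_{1,h})\le M\norm{\kp_1}_{0,\Omega}^2$ and $\norm{\bdiv\kp_{0,h}^+}_{0,\Omega}\le\norm{\kp_0}_{\mathfrak{S}}$. For $\max_t\norm{\dot\br_h}_{0,\Omega}$ I would integrate the first row of \eqref{varFormR1-varFormR2-h} from $0$ to $t$, isolate $(\dot\br_h(t),\kq^+)$, and apply \eqref{discInfSup} with $\bv=\mathbf 0$: the $A$-terms are controlled by $\norm{\dot\kp_h(t)}_{0,\Omega}\lesssim E_*^{1/2}$ and by $\norm{\kp_h(t)}_{0,\Omega}$ (bounded via $\kp_h(t)=\kp_{0,h}+\int_0^t\dot\kp_h$ and the energy bound), the forcing integral by $\norm{\bF}_{\L^1(\bL^2(\Omega))}+T\,E_*^{1/2}$, and the initial term by $\norm{Q_h\br_1}_{0,\Omega}\le\norm{\br_1}_{0,\Omega}$, which by \eqref{initcompatible} is a datum bounded by the initial conditions.

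The main obstacle is precisely the forcing term $(\bF,\bdiv\dot\kp_h^+)_\rho$: it pairs $\bF$ against the time derivative of a component of the energy, so it cannot be bounded pointwise in $t$. The integration by parts in time is what rescues the estimate (forcing the $\W^{1,1}$ regularity), after which the resulting boundary term $(\bF(t),\bdiv\kp_h^+(t))_\rho$ reintroduces $\mathcal{E}(\kp_h)^{1/2}(t)$ and must be absorbed by Young's inequality to close the otherwise self-referential bound on $E_*$. A secondary point requiring care is that the inf-sup recovery of $\dot\br_h$ unavoidably brings in the initial rotation datum $\dot\br_h(0)=Q_h\br_1$; this is controlled by the initial data and, strictly, contributes a $\norm{\br_1}_{0,\Omega}$ term on the right-hand side of \eqref{AprioriBound_h}.
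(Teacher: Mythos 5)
Your proposal is correct and follows essentially the same route as the paper: Cauchy--Lipschitz--Picard for the reduced problem \eqref{varForm-h}, the energy identity obtained by testing with $\dot\kp_h$, integration by parts in time on the forcing term (invoking $\W^{1,1}(\bL^2(\Omega))\hookrightarrow\cC^0(\bL^2(\Omega))$), absorption via Young's inequality, and recovery and estimation of the multiplier from the time-integrated equation together with the discrete inf-sup condition \eqref{discInfSup}. Your closing remark that a $\norm{\br_1}_{0,\Omega}$ contribution strictly appears on the right-hand side is consistent with the paper's own intermediate estimate \eqref{comb11}, which carries $\norm{\br_{1,h}}_{0,\Omega}$ before it is dropped from the final statement \eqref{AprioriBound_h}.
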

\begin{proof}
We take  $\kq = \dot\kp_h$ in \eqref{varForm-h} and integrate the resulting identity over $(0, t)$ to obtain
\[ 
\mathcal{E}\big(\kp_h\big)(t)  +  \int_0^t A\big(\tfrac{1}{\omega}\pi_1\dot\kp_h(s),\pi_1\dot\kp_h(s)\big)\, \text{d}s = \mathcal{E}\big(\kp_h\big)(0)  - \int_0^t \big( \bF(s),  \bdiv \dot\kp_h^+(s) \big)_\rho \, \text{d}s.
\]
Next, integrating by parts the second term on the right-hand side, and using that the second term 
on the left hand side is non-negative, we find
\begin{equation}\label{estimN0}
\begin{array}{c}
\disp
\mathcal{E}\big(\kp_h\big)(t)  \leq 
\int_0^t \big( \dot{\bF}(s), \bdiv\kp_h^+(s)\big)_\rho \, \text{d}s -  \big( \bF(t), \bdiv \kp_h^+(t)\big)_\rho  
+ \Big( \bF(0), \bdiv \kp_{0,h}^+\Big)_\rho + \mathcal{E}\big(\kp_h\big)(0)\,.
\end{array}
\end{equation}
By virtue of the Cauchy-Schwarz inequality and the Sobolev embedding 
$\W^{1,1}(\bL^2(\Omega)) \hookrightarrow \cC^0(\bL^2(\Omega))$ (see \cite[Lemma 7.1]{Roubicek}),
it follows from \eqref{estimN0} that 
\begin{align}\label{otra-ineq-E}
\mathcal{E}\big(\kp_h\big)(t)   \lesssim 
 \|\bF\|_{\W^{1,1}(\bL^2(\Omega))} \max_{t\in [0, T]}\mathcal{E}\big(\kp_h\big)^{1/2}(t) + \mathcal{E}\big(\kp_h\big)(0). 
\end{align}
Moreover,  it is clear from the definition of $\mathcal{E}$ that $\mathcal{E}\big(\kp_h\big)(0) \lesssim \norm{\kp_{0,h}}^2_{\mathfrak S} + \norm{\kp_{1,h}}^2_{0,\Omega}$, which together with \eqref{otra-ineq-E}, leads to
\begin{equation}\label{interm}
\max_{t\in [0, T]}\mathcal{E}\big(\kp_h\big)^{1/2}(t) \lesssim \|f\|_{\W^{1,1}(\bL^2(\Omega))} 
+ \norm{\kp_{0,h}}_{\mathfrak{S}}+ \norm{\kp_{1,h}}_{0,\Omega}\,.
\end{equation}
We turn now to prove the existence of a unique Lagrange multiplier $\br_h$. To this end, we let $\mathcal{G}_h(t)\in \cC^1(\mathfrak{S}_h')$ be given as follows (in terms of $\br_{0,h}$, $\br_{1,h}$ and the solution $\kp_h$ of \eqref{varForm-h}):
\begin{align*}
\mathcal{G}_h(t)\big(\kq\big)  &:= A\big(\kp_h(t) +\tfrac{1}{\omega} \int_0^t \pi_1\kp_h(s)\,\text{d}s, \kq\big ) + \int_0^t\big(\int_0^s \big(\bF(z) + \bdiv \kp_h^+(z),\,  \bdiv \kq^+ \big)_\rho \, \text{d}z \big)\text{d}s
 \\[1ex]
&\qquad - t \Big\{A\big( \kp_{1,h} + \tfrac{1}{\omega} \pi_1 \kp_{0,h} , \kq\big) +   (\br_{1,h}, \kq^+)  \Big\} - \Big\{ A\big(\kp_{0,h}, \kq\big) + (\br_{0,h},\kq^+)\Big\}.
\end{align*}
Integrating   \eqref{varForm-h} twice with respect to time we deduce that the functional $\mathcal{G}_h(t)\in \mathfrak{S}_h'$ vanishes identically on the kernel $\mathfrak{S}_{\text{sym},h}$ of the bilinear form $\mathfrak{S}_h \times \mathbb{Q}_h \ni \big(\kq, \br\big) \mapsto (\br,\kq^+)$. Therefore, the discrete inf-sup condition \eqref{discInfSup} implies the existence of  a unique $\br_h\in \cC^1(\mathbb{Q}_h)$ such that 
\begin{equation}\label{bRh}
 ( \br_h(t), \kq^+) = -\mathcal{G}_h(t)\big(\kq\big), \quad  
\forall\, t\in [0,T]\,, \quad \forall \, \kq \in \mathfrak{S}_h.
\end{equation}
Differentiating twice the last identity in the sense of distributions we deduce that  
$\big(\kp_h, \br_h\big)$ solves \eqref{varFormR1-varFormR2-h}. Moreover, evaluating 
\eqref{bRh} and its time derivative at $t=0$,  and using again the  discrete inf-sup condition 
\eqref{discInfSup}, we deduce that the initial conditions $\br_h(0) = \br_{0,h}$ and 
$\dot{\br}_h(0) = \br_{1,h}$ are fulfilled. Finally, using \eqref{discInfSup} once again we 
obtain from the Cauchy-Schwarz inequality the estimate 
\begin{align}\label{comb11}
\begin{array}{c}
\disp
\beta^*\,\norm{\dot{\br}_h(t)}_{0,\Omega} \leq 
\sup_{\kq\in \mathfrak{S}_h} \dfrac{ (\dot{\br}_h(t),\kq^+)}{\norm{\kq}_{\mathbb W}} = 
\sup_{\kq\in \mathfrak{S}_h} \frac{\dot{\mathcal{G}}_h(t)\big(\kq\big)} {\norm{\kq}_{\mathbb W}} 
\lesssim \max_{t\in[0,T]}\mathcal{E}\big(\kp_h\big)^{1/2}(t) \\[2ex]
\disp 
+ \,\, \norm{\bF}_{\W^{1,1}(\bL^2(\Omega))} + \|\kp_{0,h}\|_{0,\Omega} + \norm{\kp_{1,h}}_{0,\Omega} +  \|\br_{1,h}\|_{0,\Omega}\,.
\end{array}
\end{align}
The stability result  \eqref{AprioriBound_h} is then a consequence of  \eqref{interm},   \eqref{comb11}, 
and the definition of the discrete initial data (cf. \eqref{initial-R1-R2-h}).
\end{proof}

The a priori estimate \eqref{AprioriBound_h} permits to employ the classical Galerkin 
procedure (cf. \cite{Evans, Renardy}) to prove that the continuous problem 
\eqref{varFormR1-varFormR2}-\eqref{initial-R1-R2} admits at least a solution. 
To this end, we need to assume that for each pair $(\kq,\br) \in \mathfrak S \times \mathbb{Q}$ there holds
\begin{equation}\label{appProp}
\lim_{h\to 0} \Big\{\,\inf_{\kq_h\in \mathfrak{S}_h} \norm{\kq - \kq_h}_{\mathfrak S} + \inf_{\bs_h\in \mathbb{Q}_h} \norm{\br - \bs_h}_{0,\Omega} \,\Big\} =0.
\end{equation}
This approximation property holds true if $\Gamma_D = \partial \Omega$ or $\Gamma_N = \partial \Omega$ because of \eqref{asymp}, \eqref{asympDiv} and of the density of smooth functions in $\mathbb W\times \mathbb W$ (cf. \cite{girault2012finite}), and therefore also in $\mathfrak S$ thanks to Lemma~\ref{density}. To our knowledge, such density results are not known when $\Gamma_N\subsetneq \partial \Omega$. Nevertheless, as $\mathfrak S\times \mathbb Q$ is a separable Hilbert space,  it is possible to avoid this technical inconvenient by performing the Galerkin dimension reduction of \eqref{varFormR1-varFormR2}-\eqref{initial-R1-R2} by means of a countable set of linearly independent elements whose linear span is dense in $\mathfrak S\times \mathbb Q$, cf. \cite{ggm-JSC-2017} for an example. Here, for the sake of brevity, we  illustrate the Galerkin procedure directly in terms of the mixed finite element that defines our scheme.   
\begin{theorem}\label{theorem-R1-R2}
Assume that $\bF \in \W^{1,1}(\bL^2(\Omega))$. Then,  problem \eqref{varFormR1-varFormR2}-\eqref{initial-R1-R2} admits a unique solution. Moreover, there holds
\begin{align}\label{AprioriBoundR}
\begin{split}
\disp
\operatorname*{sup~ess}_{t\in [0,T]}\|\kp(t)\|_{\mathfrak{S}} &+ \|\kp(t)\|_{\W^{1,\infty}(\mathfrak L^2(\Omega))}  + \norm{\br}_{\W^{1,\infty}(\mathbb{L}^2(\Omega))} \lesssim  \norm{\bF}_{\W^{1,1}(\bL^2(\Omega))}
\\
\disp
& + \|\kp_0\|_{\mathfrak{S}} + \|\kp_1\|_{0,\Omega} 
+ \|\br_0\|_{0,\Omega} + \|\br_1\|_{0,\Omega}.
\end{split}
\end{align}
\end{theorem}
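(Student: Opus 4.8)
The plan is to establish existence through a Galerkin argument based on the semi-discrete scheme \eqref{varFormR1-varFormR2-h}-\eqref{initial-R1-R2-h}, to obtain uniqueness by a continuous energy identity, and to deduce \eqref{AprioriBoundR} by weak-$*$ lower semicontinuity. First I would record that, thanks to coercivity \eqref{ellipA}, the energy bound \eqref{AprioriBound_h} controls $\dot\kp_h$ in $\L^\infty(\mathfrak L^2(\Omega))$ and $\bdiv\kp_h^+$ in $\L^\infty(\bL^2(\Omega))$, uniformly in $h$; integrating $\kp_h(t)=\kp_{0,h}+\int_0^t\dot\kp_h$ and $\br_h(t)=Q_h\br_0+\int_0^t\dot\br_h$ then yields uniform bounds for $\kp_h$ in $\L^\infty(\mathfrak S)\cap\W^{1,\infty}(\mathfrak L^2(\Omega))$ and for $\br_h$ in $\W^{1,\infty}(\mathbb Q)$. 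Since $\mathfrak S$, $\mathfrak L^2(\Omega)$ and $\mathbb Q$ are separable Hilbert spaces, Banach--Alaoglu provides a subsequence along which $\kp_h\rightharpoonup^*\kp$ in $\L^\infty(\mathfrak S)$, $\dot\kp_h\rightharpoonup^*\dot\kp$ in $\L^\infty(\mathfrak L^2(\Omega))$, and $\br_h\rightharpoonup^*\br$, $\dot\br_h\rightharpoonup^*\dot\br$ in $\L^\infty(\mathbb Q)$, the limits of the derivatives being the derivatives of the limits by the usual integration by parts in time.

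Next I would pass to the limit in \eqref{varFormR1-varFormR2-h}. Fixing $\kq\in\mathfrak S$ and $\phi\in\cC^\infty(\R)$ with compact support in $(0,T)$, I would use the approximation property \eqref{appProp} to pick $\kq_h\in\mathfrak S_h$ with $\kq_h\to\kq$ in $\mathfrak S$, test the first row of \eqref{varFormR1-varFormR2-h} with $\kq_h$, and integrate against $\phi$ after moving the time derivative onto $\dot\phi$. Splitting each bilinear term as, e.g., $A(\dot\kp_h,\kq_h)=A(\dot\kp_h,\kq)+A(\dot\kp_h,\kq_h-\kq)$, the boundedness \eqref{contA}, the uniform bounds, and $\kq_h\to\kq$ make the remainders vanish, while the principal parts converge by weak-$*$ convergence against the fixed $\L^1$-in-time integrands; the same reasoning on the second row with $\bs_h\to\bs$ gives $(\bs,\kp^+)=0$. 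This recovers the distributional form of \eqref{varFormR1-varFormR2} for all $\kq\in\mathfrak S$, $\bs\in\mathbb Q$. The delicate point — which I expect to be the main obstacle — is the recovery of the initial data \eqref{initial-R1-R2}: because only first time-derivatives are bounded, $\dot\kp(0)=\kp_1$ and $\dot\br(0)=\br_1$ are not immediate. I would handle this by repeating the limit with $\phi\in\cC^2([0,T])$ satisfying $\phi(T)=\dot\phi(T)=0$ and $\phi(0)\neq0$, integrating by parts twice so that boundary contributions at $t=0$ involving $\kp_h(0),\dot\kp_h(0),\br_h(0),\dot\br_h(0)$ survive; passing to the limit and using $\kp_{0,h}\to\kp_0$, $\kp_{1,h}\to\kp_1$, $Q_h\br_j\to\br_j$ together with the inf-sup condition \eqref{InfSup} to separate contributions then identifies all four initial conditions.

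For uniqueness it suffices, by linearity, to show that the homogeneous problem ($\bF=\mathbf 0$, zero initial data) has only the trivial solution. I would integrate the first row of \eqref{varFormR1-varFormR2} once in time to obtain, for a.e. $t$ and every $\kq\in\mathfrak S$,
\begin{equation*}
A\big(\dot\kp(t)+\tfrac{1}{\omega}\pi_1\kp(t),\kq\big)+(\dot\br(t),\kq^+)=-\big(\bdiv\Psi^+(t),\bdiv\kq^+\big)_\rho,\qquad \Psi(t):=\int_0^t\kp(s)\,\text{d}s\in\mathfrak S.
\end{equation*}
After a standard density-and-measurability argument I would insert the admissible choice $\kq=\kp(t)$. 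The crucial simplification is that the constraint $(\bs,\kp^+)=0$ together with $\dot\br(t)\in\mathbb Q$ forces $(\dot\br(t),\kp^+(t))=0$, so the rotation drops out; moreover $\bdiv\kp^+(t)=\tfrac{\text{d}}{\text{d}t}\bdiv\Psi^+(t)$ and the symmetry of $A$ turn the identity into
\begin{equation*}
\tfrac{1}{2}\tfrac{\text{d}}{\text{d}t}\Big\{A\big(\kp(t),\kp(t)\big)+\big(\bdiv\Psi^+(t),\bdiv\Psi^+(t)\big)_\rho\Big\}+\big(\tfrac{1}{\omega}\cV\bze(t),\bze(t)\big)=0.
\end{equation*}
Integrating from $0$ to $t$ with vanishing initial data makes every term non-negative, hence zero; coercivity \eqref{ellipA} gives $\kp\equiv\mathbf 0$, and then the first row reduces to $(\dot\br(t),\kq^+)=0$ for all $\kq\in\mathfrak S$, whence $\dot\br\equiv\mathbf 0$ (taking $\kq=(\btau,\mathbf 0)$ with $\btau$ ranging over the dense set $\mathbb W\subset\mathbb L^2(\Omega)$) and $\br\equiv\mathbf 0$ by the zero initial condition.

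Finally, the a priori bound \eqref{AprioriBoundR} follows from weak-$*$ lower semicontinuity of the norms applied to the uniform estimates \eqref{AprioriBound_h} and \eqref{comb11}: $\operatorname*{sup~ess}_{t\in[0,T]}\|\kp(t)\|_{\mathfrak S}$ and $\|\kp\|_{\W^{1,\infty}(\mathfrak L^2(\Omega))}$ are controlled by the $\liminf$ of the corresponding discrete quantities, using \eqref{ellipA} to pass from $\mathcal E(\kp_h)$ to the norm of $\dot\kp$ and integrating $\dot\kp$ to bound $\kp$ itself. For the rotation, $\|\dot\br\|_{\L^\infty(\mathbb L^2(\Omega))}$ inherits the discrete bound \eqref{comb11} — which accounts for the appearance of $\|\br_1\|_{0,\Omega}$ — while $\|\br\|_{\L^\infty(\mathbb L^2(\Omega))}$ is obtained from $\br(t)=\br_0+\int_0^t\dot\br$, bringing in $\|\br_0\|_{0,\Omega}$. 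Since uniqueness forces the whole family to converge, these bounds hold for the genuine solution, which completes the argument.
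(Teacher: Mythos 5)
Your proposal is correct, and its existence and stability parts coincide with the paper's own argument: uniform bounds from \eqref{AprioriBound_h}, weak-$*$ extraction, limit passage via \eqref{appProp}, and lower semicontinuity (with $\|\br_0\|_{0,\Omega}$, $\|\br_1\|_{0,\Omega}$ entering exactly as you say, through $\br(t)=\br_0+\int_0^t\dot\br$ and \eqref{comb11}). Two steps differ. For the initial data, the paper gets $\kp(0)=\kp_0$ and $\br(0)=\br_0$ more cheaply than your double integration by parts, from weak convergence in $\H^1(\mathfrak L^2(\Omega))$ (weak continuity of point values at $t=0$), and identifies $\dot\kp(0)=\kp_1$, $\dot\br(0)=\br_1$ by comparing \eqref{limit} with the same identity re-derived from the strong form \eqref{strongT}; the enabling fact, which your variant also needs but leaves implicit, is that $t\mapsto A\big(\dot\kp+\tfrac1\omega\pi_1\kp,\kq\big)+(\dot\br,\kq^+)$ lies in $\W^{1,1}\hookrightarrow\cC^0$, without which ``$\dot\kp(0)$'' has no meaning for a $\W^{1,\infty}$ function. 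For uniqueness you genuinely diverge: the paper (Lemma \ref{uniqueness}) uses the Evans--Renardy backward test function $\mathfrak y(t)=-\int_t^s\kp(u)\,\mathrm{d}u$, while you integrate the equation forward once (legitimate, by the same $\W^{1,1}$-continuity) and test with $\kq=\kp(t)$, the multiplier dropping out via the weak-symmetry constraint. Your route is arguably cleaner, avoiding the parameter-dependent, piecewise-defined test function, and it lands on literally the same energy identity: with $\Psi(t)=\int_0^t\kp$, one has $\bdiv\Psi^+(s)=-\bdiv\mathfrak y^+(0)$, so your quantity $A(\kp(s),\kp(s))+\big(\bdiv\Psi^+(s),\bdiv\Psi^+(s)\big)_\rho$ plus the dissipation $\int_0^s(\tfrac1\omega\cV\bze,\bze)$ is precisely the paper's; you simply reach it by first-order manipulations that are easier to justify for the weak solution. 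The concluding step ($\kp\equiv\mathbf 0$ by \eqref{ellipA}, then $\dot\br\equiv\mathbf 0$ by testing with $(\btau,\mathbf 0)$, $\btau$ dense in $\mathbb L^2(\Omega)$, where the paper instead invokes \eqref{InfSup}) is equivalent.
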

\begin{proof}
We first observe from the definition of $\mathcal E$ and  \eqref{ellipA} that 
\begin{equation}\label{eq-extra-1}
\|\dot\kq(t)\|^2_{0,\Omega} +
\|\bdiv\kq^+(t)\|^2_{0,\Omega} \lesssim
\mathcal{E}\big(\kq\big)(t),\quad \forall \kq \in \L^{\infty}(\mathfrak{S}) \cap \W^{1,\infty}(\mathfrak{L}^2(\Omega)).
\end{equation}
Hence, it follows from \eqref{AprioriBound_h} that  $\big\{\dot\kp_h\big\}_{h}$, $\big\{\kp_h\big\}_{h}$ and $\{\br_h\}_{h}$ are uniformly bounded in  $\L^\infty(\mathfrak{L}^2(\Omega))$, $\L^\infty(\mathfrak{S})$ and $\W^{1,\infty}(\mathbb Q)$, respectively.  We can then extract weak$^*$ convergent subsequences (also denoted $\big\{\kp_h\big\}_{h}$ and $\{\br_h\}_h$) with limits $\kp \in \L^\infty(\mathfrak{S})\cap \W^{1,\infty}(\mathfrak{L}^2(\Omega))$ and $\br \in \W^{1,\infty}(\mathbb Q)$, respectively. We deduce immediately from the second equation of \eqref{varFormR1-varFormR2-h} and the density of $\mathbb Q_h$ in $\mathbb Q$ that $\kp^+\in \mathbb{L}^2_{\text{sym}}(\Omega)$. Moreover, multiplying the first equation of \eqref{varFormR1-varFormR2-h} by a function $\psi\in \cC^1([0, T])$ satisfying $\psi(T) = 0$,
and integrating by parts with respect to $t \in [0,T]$, yield 
\begin{align}\label{subsequence}
\begin{split}
 -\int_0^T \Big\{ 
 A\big( \dot\kp_h + \tfrac{1}{\omega} \pi_1\kp_h, \kq \big)  &+ (\dot{\br}_h, \kq^+) 
 \Big\} \dot{\psi}(t)\,  \text{d}t + \int_0^T \big(\bdiv \kp_h^+ + \bF(t), \bdiv\kq^+ \big)_\rho \psi(t)\, \text{d}t
 \\[1ex] 
 & =\psi(0) \Big\{ A\big( \kp_{1,h} + \tfrac{1}{\omega}\pi_1\kp_{0,h}, \kq\big) + (\br_{1,h}, \kq^+) \Big\},\quad \forall \kq \in \mathfrak{S}_h.
\end{split}	
\end{align}
Passing  to the limit in the last equation and using \eqref{appProp} shows that $\kp$ and $\br$ satisfy
\begin{align}\label{limit}
\begin{split}
-\int_0^T \Big\{ A\big( \dot\kp + \tfrac{1}{\omega} \pi_1\kp, \kq \big)  &+ (\dot{\br}, \kq^+) \Big\} \dot{\psi}(t)\,  \text{d}t + \int_0^T \big(\bdiv \kp^+ + \bF(t), \bdiv\kq^+ \big)_\rho \psi(t)\, \text{d}t
 \\[1ex] 
 & =\psi(0) \Big\{ A\big( \kp_{1} + \tfrac{1}{\omega}\pi_1\kp_{0}, \kq\big) + (\br_{1}, \kq^+) \Big\},
\end{split}	
\end{align}
for all $\kq \in \mathfrak{S}$ and for all $\psi\in \cC^1([0, T])$ such that $\psi(T) = 0$. This proves that $(\kp,\br)$ solves the first equation of \eqref{varFormR1-varFormR2} provided the time derivative is understood in the sense of distributions on $(0, T)$. In addition, we notice that $\kp_h$ also converges weakly to $\kp$ in $\H^{1}(\mathfrak L^2(\Omega))$. Hence $\kp_h(0)$ converges weakly to $\kp(0)$ in $\mathfrak L^2(\Omega)$, and since $\kp_h(0)= \kp_{0,h}$ also converges strongly to $\kp_0$ in $\mathfrak L^2(\Omega)$, we conclude that $\kp(0)=\kp_0$. Similarly, the sequence $\{\br_h(0)\}_h= \{\br_{0,h}\}_h$ converges weakly to $\br(0)$ in $\mathbb{Q}$ and strongly to $\br_0$ in $\mathbb{Q}$, which gives $\br(0) = \br_{0}$. To obtain the remaining initial conditions we take $\psi$ in \eqref{limit} such that $\psi(0) = 0$ and integrate the first term backwardly with respect to $t$ to get 
\begin{equation}\label{strongT}
\frac{\text{d}}{\text{d}t} \Big\{ A\big( \dot\kp + \tfrac{1}{\omega}\pi_1\kp, \kq\big) + (\dot{\br}, \kq^+) \Big\}= - \big(\bdiv\kp^+ + \bF(t),\, \bdiv\kq^+\big)_\rho,\quad \forall \kq \in \mathfrak{S}.
\end{equation}
It follows that $ t \mapsto A\big( \dot\kp + \tfrac{1}{\omega}\pi_1\kp, \kq\big) + (\dot{\br}, \kq^+) $ 
belongs to $\W^{1,1}(\mathfrak{S}') \hookrightarrow \cC^0(\mathfrak{S}')$, 
and we can test \eqref{strongT} with a function $\psi\in \cC^1([0, T])$ satisfying $\psi(T) = 0$ to get
\begin{align}\label{limit0}
\begin{split}
\disp
 -\int_0^T \Big\{ \Big( A\big( \dot\kp + \tfrac{1}{\omega}\pi_1\kp, \kq\big) + (\dot{\br},\kq^+)\Big) \Big\}\dot{\psi}(t)\, \text{d}t 
 +  \int_0^T \big(\bdiv \kp^+(t) + \bF(t),\bdiv\kq^+\big)_\rho \psi(t)\,  \text{d}t  
 \\[1ex]
=  \psi(0)\Big( A\big( \dot\kp(0) + \tfrac{1}{\omega}\pi_1\kp(0), \kq\big) + (\dot{\br}(0), \kq^+)\Big)
\end{split}
\end{align}
for all $\kq \in \mathfrak{S}$. Comparing \eqref{limit} with \eqref{limit0} we deduce that $\dot\kp(0) = \kp_1$  and $\dot{\br}(0) = \br_1$. Finally, the stability estimate \eqref{AprioriBoundR} is obtained by taking the limit $h\to 0$ in \eqref{AprioriBound_h}. 
\end{proof}

The usual strategy (cf. \cite{Evans} or \cite[Section 11.2]{Renardy}) providing uniqueness for second-order hyperbolic evolution problems can be applied here as follows. 
\begin{lemma}\label{uniqueness}
The solution of problem \eqref{varFormR1-varFormR2}-\eqref{initial-R1-R2} is unique.
\end{lemma}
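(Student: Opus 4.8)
The plan is to run the classical energy argument for second-order hyperbolic evolution, adapted to the present mixed setting through the Ladyzhenskaya--Vishik device of integrating once in time \emph{before} testing. Since problem \eqref{varFormR1-varFormR2}--\eqref{initial-R1-R2} is linear, it suffices to show that the only solution with $\bF = \mathbf 0$ and vanishing initial data is $(\kp,\br) = (\mathbf 0,\mathbf 0)$. So I would fix such a solution and aim to prove $\kp\equiv\mathbf 0$ and then $\br\equiv\mathbf 0$.

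First I would start from the distributional form \eqref{strongT} of the first equation (with $\bF=\mathbf 0$) and integrate it in time over $(0,t)$. Because $\kp(0)=\mathbf 0$, $\dot\kp(0)=\mathbf 0$ and $\dot\br(0)=\mathbf 0$, the boundary contributions at $t=0$ drop, leaving, for every $\kq\in\mathfrak{S}$ and a.e.\ $t$,
\[
A\big(\dot\kp(t) + \tfrac{1}{\omega}\pi_1\kp(t),\, \kq\big) + (\dot\br(t),\kq^+) + \int_0^t\big(\bdiv\kp^+(\tau),\,\bdiv\kq^+\big)_\rho\,\text{d}\tau = 0.
\]
The decisive reason to integrate first is exactly the regularity mismatch: the natural energy test field would be $\dot\kp$, which lies only in $\mathfrak{L}^2(\Omega)$ and not in $\mathfrak{S}$, whereas after the time integration the admissible choice $\kq=\kp(t)\in\mathfrak{S}$ becomes available.

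Then I would substitute $\kq=\kp(t)$, valid for a.e.\ $t$ once a common null set is fixed (using separability of $\mathfrak S$). Three simplifications occur: the multiplier term $(\dot\br(t),\kp^+(t))$ vanishes, since the constraint forces $\kp^+(t)\in\mathbb{L}^2_{\text{sym}}(\Omega)$, the orthogonal complement of $\mathbb{Q}$, while $\dot\br(t)\in\mathbb{Q}$; by symmetry of $A$ one has $A(\dot\kp,\kp)=\tfrac12\tfrac{\text{d}}{\text{d}t}A(\kp,\kp)$; and, writing $G(t):=\int_0^t\bdiv\kp^+(\tau)\,\text{d}\tau$ so that $\dot G=\bdiv\kp^+$, the memory term equals $(G,\dot G)_\rho=\tfrac12\tfrac{\text{d}}{\text{d}t}(G,G)_\rho$. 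The damping contribution $A(\tfrac{1}{\omega}\pi_1\kp,\kp)=(\tfrac{1}{\omega}\cV\bze,\bze)$ is nonnegative because $\cV$ is positive definite and $\omega\ge\omega_0>0$. Integrating the resulting identity over $(0,s)$ and using $\kp(0)=\mathbf 0$, $G(0)=\mathbf 0$ then yields
\[
\tfrac12\,A\big(\kp(s),\kp(s)\big) + \tfrac12\,(G(s),G(s))_\rho + \int_0^s\big(\tfrac{1}{\omega}\cV\bze,\bze\big)\,\text{d}\tau = 0,
\]
a sum of nonnegative terms. Hence $A(\kp(s),\kp(s))=0$, so the coercivity \eqref{ellipA} gives $\kp(s)=\mathbf 0$ for every $s\in(0,T]$, i.e.\ $\kp\equiv\mathbf 0$. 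Returning to the integrated identity with $\kp\equiv\mathbf 0$ leaves $(\dot\br(t),\kq^+)=0$ for all $\kq\in\mathfrak S$, and the inf-sup condition \eqref{InfSup} forces $\dot\br\equiv\mathbf 0$; together with $\br(0)=\mathbf 0$ this gives $\br\equiv\mathbf 0$.

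I expect the main obstacle to be purely technical rather than conceptual: justifying the above manipulations at the available regularity $\kp\in\L^\infty(\mathfrak{S})\cap\W^{1,\infty}(\mathfrak{L}^2(\Omega))$ and $\br\in\W^{1,\infty}(\mathbb{Q})$. Concretely, one must verify that $t\mapsto A(\kp(t),\kp(t))$ and $t\mapsto (G(t),G(t))_\rho$ are absolutely continuous with the claimed a.e.\ derivatives, and that the pointwise substitution $\kq=\kp(t)$ is legitimate for a.e.\ $t$. None of these steps require new ideas; the genuine conceptual point is the single time-integration that converts the forbidden energy test with $\dot\kp$ into an admissible test with $\kp(t)$.
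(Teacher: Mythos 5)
Your proposal is correct and follows essentially the same route as the paper: the paper realizes the identical classical energy trick by testing with the backward primitive $\mathfrak{y}(t):=-\int_t^s\kp(u)\,\mathrm{d}u$ (for fixed $s$) and integrating by parts in time, which yields precisely your identity $A\big(\kp(s),\kp(s)\big)+\big(\bdiv\mathfrak{y}^+(0),\bdiv\mathfrak{y}^+(0)\big)_\rho+\int_0^s A\big(\tfrac{1}{\omega}\pi_1\kp,\pi_1\kp\big)\,\mathrm{d}t=0$ (note $\bdiv\mathfrak{y}^+(0)=-\int_0^s\bdiv\kp^+$ is your $G(s)$ up to sign), and it eliminates the multiplier term by working with symmetric test functions exactly as your choice $\kq=\kp(t)$ does. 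Your concluding recovery of $\br=\mathbf 0$ via the inf-sup condition \eqref{InfSup} is likewise the paper's final step.
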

\begin{proof}
Let $(\kp, \br)$ be the solution of \eqref{varFormR1-varFormR2}-\eqref{initial-R1-R2} with vanishing source term  and initial conditions. Proceeding as in \cite{Evans, Renardy}, we introduce for each fixed $s\in (0, T)$ the function 
\[
\W^{1, \infty}(\mathfrak{S}_{\text{sym}}) \ni
\mathfrak{y}(t):= \begin{cases}
            -\int_t^s \kp(u)\,  \text{d}u & t < s,\\
            (\mathbf 0, \mathbf 0)\in \mathcal{L}^2(\Omega) & t \geq s .
           \end{cases}
\]
It follows from the first row of \eqref{varFormR1-varFormR2} (with $\bF=\mathbf 0$) that $ t \mapsto A\big( \ddot\kp , \kq) = - A\big(\tfrac{1}{\omega}\pi_1\dot\kp, \kq\big) - \big(\bdiv\kp^+, \bdiv\kq^+\big)_\rho $ is a continuous linear on  $\mathfrak{S}_{\text{sym}}$. Hence, we can take $\kq = \mathfrak{y}$ in \eqref{varFormR1-varFormR2} and integrate the resulting equation over $[0, T]$ to obtain 
\[
\int_0^T A\big(  \ddot\kp + \tfrac{1}{\omega} \pi_1 \dot\kp, \mathfrak{y} \big)\, \text{d}t  + \int_0^T \big(\bdiv \kp^+ , \bdiv\mathfrak{y}^+ \big)_\rho\, \text{d}t = 0.
\]
As the bilinear form $A$ is symmetric, we can integrate by parts in time to get
\[
-\int_0^T A\big( \dot\kp + \tfrac{1}{\omega} \pi_1 \kp, \dot{\mathfrak{y}} \big) \, 
\text{d}t  + 
\int_0^T \big( \bdiv\kp^+, \bdiv\mathfrak{y}^+  \big)_\rho \, \text{d}t = 0,
\]
where we took into account that $\kp(0) = \mathfrak{y}(T) = (\mathbf 0, \mathbf 0)$.  Using the fact that  $\dot{\mathfrak y }(t) = - \kp(t)$ for $0 \leq t < s$  in the last equation gives 
\[
 \frac{1}{2}\int_0^s \frac{\text{d}}{\text{d}t} \Big\{  (\bdiv\mathfrak{y}^+, \bdiv\mathfrak{y}^+)_{\rho} -  A\big( \kp, \kp \big)\Big\} \text{d}t -  \int_0^s A\big(\tfrac{1}{\omega}\pi_1 \kp, \pi_1\kp \big)\, \text{d}t= 0,
\]
which can equivalently be written 
\[
 \big(\bdiv\mathfrak{y}^+(0), \bdiv\mathfrak{y}^+(0)\big)_{\rho} + A\big( \kp(s), \kp(s) \big) + \int_0^s A\big(\tfrac{1}{\omega}\pi_1 \kp, \pi_1\kp \big)\, \text{d}t= 0,
\]
and the coerciveness of $A$  implies that $\kp =\mathbf 0$ in $\mathfrak{L}^2(\Omega)$.  Finally, we deduce from the first equation of \eqref{varFormR1-varFormR2} and the homogeneous initial conditions on $\br$ that $(\br, \kq^+) = 0$ for all $\kq\in \mathfrak{S}$, which, thanks to the continuous inf-sup condition \eqref{InfSup}, implies that $\br = \mathbf 0$.
\end{proof}

We end this section with a couple of important remarks. Indeed, following \cite[Section 11.2.4]{Renardy}, 
one can show that the solution $(\kp, \br)$ of \eqref{varFormR1-varFormR2}-\eqref{initial-R1-R2} satisfies   
$\kp\in \cC^0(\mathfrak S)\cap \cC^{1}(\mathfrak{L}_{\text{sym}}^2(\Omega))$ and $\br \in \cC^{1}(\mathbb{Q})$.
In turn, in the case of zero source term $\bf$, the identity  
	\[
	\dfrac{\text{d}}{\text{d}t} {\mathcal{E}}\big(\kp_h\big)(t) = - A\big(\tfrac{1}{\omega}\pi_1 \dot\kp, \pi_1\dot\kp \big) \leq 0\,,
	\]
which is obtained by taking $\kq = \dot\kp$ in \eqref{varFormR1-varFormR2}, proves that the viscoelastic material 
does dissipate energy.  

\section{Convergence analysis of the semi-discrete problem}\label{section6}

The elliptic projector $\varXi_h:\mathfrak S\to \mathfrak S_h$ introduced in Section~\ref{section4} will allow us to use standard techniques of error analysis for our scheme. From now on we assume that $\kp_1\in \mathfrak{S}_{\text{sym}}$ and we consider  a solution $(\kp_h(t),\br_h(t))$ of problem \eqref{varFormR1-varFormR2-h} started up with the initial conditions 
\begin{equation}\label{initial-R1-R2-h*}
\begin{array}{c}
\kp_h(0)= \varXi_h\kp_0 \,,
\quad  \dot\kp_h(0) = \varXi_h\kp_1,\quad 
 \br_h(0) = Q_h\br_{0},\quad \dot\br_h(0) =  Q_h\br_{1}.
\end{array}
\end{equation}
In this way, the projected errors $\be_{\kp,h}(t) := \varXi_h\kp(t) - \kp_h(t)$ and $\be_{r,h}(t) := Q_h\br(t) - \br_h(t)$  
satisfy by construction vanishing initial conditions: 
\begin{equation}\label{inti0}
\be_{\kp, h}(0)=(\mathbf 0,\mathbf 0)\quad  \be_{r,h}(0) = \mathbf{0}, \quad 
\dot{\be}_{\kp, h}(0)= (\mathbf 0,\mathbf 0), \quad\hbox{and}\quad  \dot{\be}_{r,h}(0)= \mathbf{0}\,.
\end{equation}
Moreover, by definition of $\varXi_h$ and due to the second equations of \eqref{varFormR1-varFormR2} 
and \eqref{varFormR1-varFormR2-h}, it turns out that 
\begin{equation}\label{skew0}
	\big(\bs, \be_{\kp, h}^+(t)\big) = \big(\bs, \kp^+(t)\big) - \big(\bs, \kp^+_h(t)\big)= 0,\quad \forall \bs \in 
	\mathbb{Q}_h.
\end{equation}  
\begin{theorem}\label{errorE} 
Assume that the solution of problem \eqref{varFormR1-varFormR2}-\eqref{initial-R1-R2} satisfies  the regularity assumptions $\kp\in \cC^2(\mathfrak{S})$ and $\br\in \cC^2(\mathbb Q)$. Then, the following error estimate holds 
\begin{equation}
\begin{array}{c}
\disp
\max_{t\in [0, T]}\norm{(\kp-\kp_h)(t)}_{\mathfrak{S}} 
\,+\, \max_{t\in [0, T]}\norm{(\dot\kp-\dot\kp_h)(t)}_{0,\Omega} + \max_{t\in [0, T]}\norm{ (\dot\br - \dot \br_h)(t)}_{0,\Omega} 
\\[2ex]
\disp
 \lesssim \, \norm{\kp - \varXi_h\kp}_{\W^{2,\infty}(\mathfrak{S})}+
\norm{\br - Q_h\br}_{\W^{2,\infty}(\mathbb{L}^2(\Omega))}.
\end{array}
\end{equation}
\end{theorem}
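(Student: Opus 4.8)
The plan is to prove the error estimate by the standard Galerkin-orthogonality-plus-energy argument, using the elliptic projector $\varXi_h$ to split the error and to obtain a clean error equation whose driving term is the projection error. First I would write the total errors as
\[
\kp - \kp_h = (\kp - \varXi_h\kp) + \be_{\kp,h},
\qquad
\dot\br - \dot\br_h = (\dot\br - Q_h\dot\br) + \dot\be_{r,h},
\]
so that, by the triangle inequality, it suffices to bound the projected-error quantities $\be_{\kp,h}$ and $\dot\be_{r,h}$ in the relevant norms, since the projection-error parts are exactly the right-hand side of the claimed estimate. The key preliminary step is to derive the error equation: subtracting \eqref{varFormR1-varFormR2-h} from \eqref{varFormR1-varFormR2} tested against $\kq\in\mathfrak S_h$, and then inserting $\varXi_h\kp$ via the defining relations \eqref{Xih} of the projector, I expect the troublesome term $(\bdiv(\kp-\kp_h)^+,\bdiv\kq^+)_\rho$ to collapse to $(\bdiv\be_{\kp,h}^+,\bdiv\kq^+)_\rho$ because of the commuting-diagram property \eqref{div0}, which guarantees $\bdiv(\varXi_h\kp)^+ = $ the $U_h$-projection of $\bdiv\kp^+$ modulo the $\rho$-weight. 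This yields a consistency identity of the form
\[
\frac{\text{d}}{\text{d}t}\Big\{A\big(\dot\be_{\kp,h}+\tfrac{1}{\omega}\pi_1\be_{\kp,h},\kq\big)+(\dot\be_{r,h},\kq^+)\Big\}
+\big(\bdiv\be_{\kp,h}^+,\bdiv\kq^+\big)_\rho
= \frac{\text{d}}{\text{d}t}\,\mathcal R_h(t)(\kq),
\quad\forall\kq\in\mathfrak S_h,
\]
where $\mathcal R_h$ is a residual functional built solely from $\kp-\varXi_h\kp$ and $\br - Q_h\br$ (and their time derivatives).

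Next I would run the discrete energy argument on the projected error, exactly mimicking the proof of the stability theorem. Because $\be_{\kp,h}\in\cC^1(\mathfrak S_{\text{sym},h})$ by \eqref{skew0}, the multiplier term drops out when I test with $\kq=\dot\be_{\kp,h}$, and I obtain
\[
\frac{\text{d}}{\text{d}t}\,\mathcal E(\be_{\kp,h})(t)
+ A\big(\tfrac{1}{\omega}\pi_1\dot\be_{\kp,h},\pi_1\dot\be_{\kp,h}\big)
= \text{(residual terms)}.
\]
Integrating over $(0,t)$, discarding the nonnegative dissipation term on the left, and using the vanishing initial conditions \eqref{inti0} so that $\mathcal E(\be_{\kp,h})(0)=0$, I reduce matters to controlling the time-integrated residual. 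The coercivity \eqref{ellipA} and the equivalence \eqref{eq-extra-1} then convert the energy bound into control of $\norm{\dot\be_{\kp,h}}_{0,\Omega}$ and $\norm{\bdiv\be_{\kp,h}^+}_{0,\Omega}$; combined with the inf-sup condition \eqref{discInfSup} applied to recover $\norm{\be_{\kp,h}^+}_{0,\Omega}$ (and hence the full $\mathfrak S$-norm), this gives the first two terms of the estimate.

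The residual terms will involve factors like $A(\dot{(\kp-\varXi_h\kp)},\dot\be_{\kp,h})$ and $((\dot\br - Q_h\dot\br)^{\cdot},\dot\be_{\kp,h}^+)$; the main obstacle is to bound their time integrals without losing a derivative, which I expect to handle by integration by parts in time (transferring a derivative off $\dot\be_{\kp,h}$ onto the residual, thereby explaining the appearance of the $\W^{2,\infty}$ norms on the right-hand side) followed by Cauchy--Schwarz and a Grönwall-type absorption of the $\max_t\mathcal E(\be_{\kp,h})^{1/2}$ factor, just as in \eqref{otra-ineq-E}--\eqref{interm}. Finally, to recover $\norm{\dot\be_{r,h}}_{0,\Omega}$ I would differentiate the error equation once in time, isolate $(\dot\be_{r,h},\kq^+)$, and invoke \eqref{discInfSup} once more, bounding the right-hand side by the already-controlled energy quantities plus residuals exactly as in \eqref{comb11}. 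The one step warranting care is verifying that the commuting-diagram identity \eqref{div0} really does eliminate the divergence projection error from the energy estimate, since otherwise an extra $\bdiv(\kp-\varXi_h\kp)^+$ term would pollute the bound; this is precisely why the projector $\varXi_h$, rather than a generic interpolant, is the right tool here.
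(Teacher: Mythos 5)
Your overall strategy is exactly the paper's: split the error through the projectors $\varXi_h$ and $Q_h$, use the commuting property \eqref{div0} together with $\bdiv\mathbb{W}_h\subset\mathbf{U}_h$ so that the divergence term collapses to the projected error, test with $\kq=\dot\be_{\kp,h}$ (the multiplier term dropping by \eqref{skew0}), exploit the vanishing initial data \eqref{inti0}, and recover the rotation error from the discrete inf-sup condition \eqref{discInfSup}. Two execution steps, however, would fail as written. First, \eqref{discInfSup} cannot ``recover $\norm{\be_{\kp,h}^+}_{0,\Omega}$'': that inf-sup condition bounds pairs $(\bs,\bv)\in\mathbb{Q}_h\times\mathbf{U}_h$ (i.e.\ the multipliers) by a supremum over $\kq\in\mathfrak S_h$; it says nothing about the primal variable $\be_{\kp,h}\in\mathfrak S_h$. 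The missing $\L^2$-part of the $\mathfrak S$-norm comes instead from the fundamental theorem of calculus, $\be_{\kp,h}(t)=\int_0^t\dot\be_{\kp,h}(s)\,\text{d}s$, which is how the paper passes from \eqref{har0} to \eqref{har1}. Second, to isolate $(\dot\be_{r,h},\kq^+)$ you must \emph{integrate} your error equation once in time (using \eqref{inti0}), not differentiate it: differentiating your identity, which carries an outer $\frac{\text{d}}{\text{d}t}$, produces $\ddot\be_{r,h}$ and would demand a further time derivative of the residual; the paper's \eqref{errorIdentityr0} is obtained precisely by one time integration of \eqref{errorIdentity}.

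A further simplification worth noting: the integration by parts in time and the Gr\"onwall-type absorption you anticipate are unnecessary. Since the theorem assumes $\kp\in\cC^2(\mathfrak S)$ and $\br\in\cC^2(\mathbb Q)$, and the projectors commute with time differentiation \eqref{idempoi}, the paper writes the error equation pointwise in time with the outer derivative already expanded, so the residual $F$ directly involves $\varXi_h\ddot\kp-\ddot\kp$, $\pi_1(\varXi_h\dot\kp-\dot\kp)$ and $Q_h\ddot\br-\ddot\br$, and is bounded uniformly in $t$ by the $\W^{2,\infty}$ projection errors; the energy identity then gives $\dot{\mathcal E}(\be_{\kp,h})\lesssim \mathcal E(\be_{\kp,h})^{1/2}$ times the residual norm, i.e.\ $\frac{\text{d}}{\text{d}t}\,\mathcal E(\be_{\kp,h})^{1/2}$ is bounded by the residual, and a single time integration from the zero initial state concludes, with no absorption argument needed. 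With these two repairs your argument coincides with the paper's proof.
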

\begin{proof}
We first observe that,  because of the regularity assumptions on $\kp$ and $\br$, we have
\begin{equation}\label{idempoi}
\begin{array}{l}
\frac{\text{d}^i}{\text{d}t^i}\varXi_h\kp(t) = \varXi_h\frac{\text{d}^i}{\text{d}t^i}\kp(t) 
\quad\text{and} \quad  \frac{\text{d}^i}{\text{d}t^i} Q_h\br(t) = Q_h \frac{\text{d}^i}{\text{d}t^i}\br(t),  \quad \forall\, i \in \{1,2\}\,, \quad \forall\, t\in [0,T].
\end{array}
\end{equation} 
Then, using that the scheme \eqref{varFormR1-varFormR2-h} is consistent with  \eqref{varFormR1-varFormR2},
and keeping in mind \eqref{div0} and the fact that $\bdiv \mathbb W_h \subset \mathbf U_h$, we readily find that 
\begin{align}\label{errorIdentity}
\begin{split}
A\big( \ddot{\be}_{\kp, h} + \tfrac{1}{\omega} \pi_1\dot{\be}_{\kp, h}, \kq\big)  + (\ddot{\be}_{r,h}, \kq^+)   
+ \big(\bdiv \be_{\kp,h}^+(t),  \bdiv\kq^+ \big)_\rho&= F(\kq), \quad \forall \kq \in \mathfrak S_h,
\end{split}
\end{align}
with
\[
F\big(\kq\big):=  A\big( \varXi_h \ddot\kp - \ddot\kp +\tfrac{1}{\omega} \pi_1(\varXi_h \dot\kp - \dot\kp), \kq\big)  +(Q_h\ddot{\br} - \ddot{\br}, \kq^+).
\]
Next, choosing  $\kq = \dot{\be}_{\kp, h}(t)$ in  \eqref{errorIdentity} and taking into account \eqref{skew0}, we deduce that 
\begin{equation}
	\dot{\mathcal{E}}\big({\be}_{\kp, h}\big)(t) + 
	 A\big( \tfrac{1}{\omega}\pi_1\dot{\be}_{\kp, h}, \pi_1\dot{\be}_{\kp, h}\big) = F(\dot{\be}_{\kp, h}).
\end{equation}
Hence,  the Cauchy-Schwarz inequality combined with \eqref{eq-extra-1}  give   
\[
\dfrac{\dot{\mathcal{E}}\big({\be}_{\kp, h}\big)}{2\sqrt{{\mathcal{E}}\big({\be}_{\kp, h}\big)}}
 \lesssim   \sum_{i=1,2}\norm{ \frac{\text{d}^i\kp}{\text{d}t^i}  - \varXi_h\frac{\text{d}^i\kp}{\text{d}t^i}}_{0,\Omega} + 
 \norm{\ddot{\br} - Q_h\ddot{\br}}_{0,\Omega},
\]
from which, integrating with respect to time and using \eqref{XihStab}, we arrive at 
\begin{equation}\label{8.1}
\max_{t\in [0, T]}{\mathcal{E}}\big({\be}_{\kp, h}\big)^{1/2}(t) \lesssim  \norm{\kp-\varXi_h\kp}_{\W^{2,\infty}(\mathfrak{L}^2(\Omega))} + \norm{\br - Q_h\br}_{\W^{2,\infty}(\mathbb{L}^2(\Omega))}.
\end{equation}
Now, it follows from  \eqref{eq-extra-1} that 
\begin{equation}\label{har0}
\max_{t\in [0, T]}\norm{ \bdiv {\be}_{\kp, h}^+(t) }_{0,\Omega} 
+ \max_{t\in [0, T]}\norm{\dot \be_{\kp, h}(t) }_{0,\Omega} \lesssim \norm{\kp-\varXi_h\kp}_{\W^{2,\infty}(\mathfrak{L}^2(\Omega))} + \norm{\br - Q_h\br}_{\W^{2,\infty}(\mathbb{L}^2(\Omega))}.
\end{equation}
Actually, as  $\be_{\kp, h}(t) = \int_0^t \dot{\be}_{\kp, h}(s)\text{d}s$, we  also have
\begin{equation}\label{har1}
\max_{t\in [0, T]}\norm{ {\be}_{\kp, h}(t) }_{\mathfrak S} 
+ \max_{t\in [0, T]}\norm{\dot \be_{\kp, h}(t) }_{0,\Omega} \lesssim \norm{\kp-\varXi_h\kp}_{\W^{2,\infty}(\mathfrak{L}^2(\Omega))} + \norm{\br - Q_h\br}_{\W^{2,\infty}(\mathbb{L}^2(\Omega))}.
\end{equation}

In order to estimate the error in the rotation $\br$ we first notice that, 
integrating once with respect to time in  \eqref{errorIdentity}, we obtain
\begin{align}\label{errorIdentityr0}
\begin{split}
(\dot{\be}_{\br,h}, \kq^+) &= -
A\big( \dot{\be}_{\kp,h}  + \tfrac{1}{\omega}\pi_1 \be_{\kp,h}, \kq\big)   
-  \int_0^t \big(\bdiv\be_{\kp,h}^+(s),  \bdiv\kq^+ \big)_\rho\, \text{d}s,
\\[1ex]
&\qquad -A\big( \dot\kp - \varXi_h \dot\kp + \tfrac{1}{\omega}\pi_1 (\kp - \varXi_h\kp), \kq\big) + A\big( \kp_1 - \varXi_h \kp_1 + \tfrac{1}{\omega}\pi_1 (\kp_0 - \varXi_h\kp_{0}), \kq\big) 
\\[1ex]
&\qquad - (\dot\br - Q_h\dot\br, \kq^+) + (\br_1 - Q_h\br_{1}, \kq^+),\quad \forall \kq \in \mathfrak S_h.
\end{split}
\end{align}
Therefore, the inf-sup condition \eqref{discInfSup}, identity \eqref{errorIdentityr0},  the Cauchy-Schwarz inequality, \eqref{contA} and \eqref{har1} provide  
\begin{equation}\label{hart}
	\beta^*\norm{\dot{\be}_{\br,h} }_{0,\Omega} \leq \sup_{\kq\in \mathfrak S_h}
\dfrac{(\dot{\be}_{\br,h}, \kq^+ )}{\norm{\kq}_{\mathfrak S}}
\lesssim 
\norm{\kp-\varXi_h\kp}_{\W^{2,\infty}(\mathfrak{L}^2(\Omega))} + \norm{\br - Q_h\br}_{\W^{2,\infty}(\mathbb{L}^2(\Omega))}.
\end{equation}
Using  the triangle inequality relatively to the splittings  $\kp - \kp_h = (\kp - \varXi_h\kp) + \be_{\kp, h}$ and $\br - \br_h = (\br - Q_h\br) + \be_{\br, h}$ of each component the error and taking into account \eqref{hart} and \eqref{har1}, we conclude that 
\begin{equation}\label{eqBb}
\begin{array}{c}
\disp
\max_{[0, T]}\norm{\kp - \kp_h}_{\mathfrak{S}} 
+ \max_{[0, T]}\norm{\dot\kp-\dot\kp_h}_{0,\Omega} + \max_{t\in [0, T]}\norm{\dot\br-\dot\br_h}_{0,\Omega} 
\\[2ex]
\disp
\lesssim \,\norm{\kp - \varXi_h\kp}_{\W^{2,\infty}(\mathfrak{S})}
 + \norm{\br - Q_h\br}_{\W^{2,\infty}(\mathbb{L}^2(\Omega))},
\end{array}
\end{equation}
which gives the result.
\end{proof}

\begin{corollary}\label{coro1}
Assume that the solution of problem \eqref{varFormR1-varFormR2}-\eqref{initial-R1-R2} is such that  $\kp \in \cC^2(\mathbb{H}^{k}(\Omega)^{2})$, $\bdiv \kp^+ \in \cC^2(\bH^{k}(\Omega))$ and $\br\in \cC^2(\mathbb{H}^{k}(\Omega))$. Then there holds
\begin{equation}\label{asympSD}
\max_{[0, T]}\norm{\dot\kp(t)-\dot\kp_h(t)}_{0,\Omega} + \max_{[0, T]}\norm{\kp(t)-\kp_h(t)}_{\mathfrak S} +\norm{\br-\br_h}_{\W^{1,\infty}(\mathbb{L}^2(\Omega))}   \lesssim  h^k.
\end{equation}
\end{corollary}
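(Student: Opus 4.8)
The plan is to substitute the abstract estimate of Theorem~\ref{errorE} into the approximation properties of the projectors $\varXi_h$ and $Q_h$, and then to separately recover the zeroth-order-in-time contribution to $\norm{\br - \br_h}_{\W^{1,\infty}(\mathbb L^2(\Omega))}$, which is the only piece not furnished directly by Theorem~\ref{errorE}.

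First I would bound the right-hand side of Theorem~\ref{errorE}. Writing $\kp = (\bze,\bgam)$ and choosing the componentwise BDM-interpolant $\kq_h := (\Pi_h\bze,\Pi_h\bgam)$ as competitor in \eqref{XihStab}, the linearity identity $\kq_h^+ = \Pi_h\kp^+$ together with \eqref{asymp} (taking $m=k$, admissible since $1\le k\le k+1$) and the commuting-diagram estimate \eqref{asympDiv} (taking $m=k$) yields $\norm{\kp-\varXi_h\kp}_{\mathfrak S}\lesssim h^k$ at each time $t$, under the regularity $\kp(t)\in\mathbb H^k(\Omega)^2$ and $\bdiv\kp^+(t)\in\bH^k(\Omega)$. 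Since $\varXi_h$ is a bounded, time-independent linear operator it commutes with time differentiation, so the same argument applied to $\dot\kp$ and $\ddot\kp$ (whose required regularity is exactly the hypotheses $\kp\in\cC^2(\mathbb H^k(\Omega)^2)$ and $\bdiv\kp^+\in\cC^2(\bH^k(\Omega))$) gives $\norm{\kp-\varXi_h\kp}_{\W^{2,\infty}(\mathfrak S)}\lesssim h^k$. Analogously, the standard approximation property of the $\mathbb L^2(\Omega)$-orthogonal projector $Q_h$ onto $\mathbb Q_h=\cP_{k-1}(\cT_h)^{d\times d}\cap\mathbb Q$ gives $\norm{\br-Q_h\br}_{0,\Omega}\lesssim h^k\norm{\br}_{k,\Omega}$ via Bramble--Hilbert, and applying it to $\dot\br,\ddot\br$ (using $\br\in\cC^2(\mathbb H^k(\Omega))$) yields $\norm{\br-Q_h\br}_{\W^{2,\infty}(\mathbb L^2(\Omega))}\lesssim h^k$.

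Substituting these two bounds into the conclusion of Theorem~\ref{errorE} immediately controls $\max_{[0,T]}\norm{\kp-\kp_h}_{\mathfrak S}$, $\max_{[0,T]}\norm{\dot\kp-\dot\kp_h}_{0,\Omega}$ and $\max_{[0,T]}\norm{\dot\br-\dot\br_h}_{0,\Omega}$ by $h^k$; the last of these accounts for the first-derivative part of $\norm{\br-\br_h}_{\W^{1,\infty}(\mathbb L^2(\Omega))}$. What remains is the zeroth-order term $\max_{[0,T]}\norm{(\br-\br_h)(t)}_{0,\Omega}$, which is the single point not covered verbatim by Theorem~\ref{errorE}. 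I would handle it through the triangle inequality $\br-\br_h=(\br-Q_h\br)+\be_{\br,h}$: the first summand is again $O(h^k)$ by the projection estimate just established, and for $\be_{\br,h}$ I would exploit its vanishing initial value from \eqref{inti0}, so that $\be_{\br,h}(t)=\int_0^t\dot\be_{\br,h}(s)\,\mathrm ds$ and hence $\norm{\be_{\br,h}(t)}_{0,\Omega}\le T\max_{[0,T]}\norm{\dot\be_{\br,h}}_{0,\Omega}$; the right-hand side is $O(h^k)$ by \eqref{hart} combined with the two approximation bounds. Collecting all contributions gives \eqref{asympSD}. The routine part is the insertion of the interpolation estimates; the only genuinely non-mechanical step is recognizing that the $\W^{1,\infty}(\mathbb L^2(\Omega))$-norm demands control of $\br-\br_h$ itself and not merely its time derivative, and reconstructing that bound from the vanishing-initial-data identity for $\be_{\br,h}$ together with the already-proven estimate on $\dot\be_{\br,h}$.
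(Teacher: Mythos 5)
Your proposal is correct and follows essentially the same route as the paper, whose proof is a one-line citation of exactly the ingredients you use: \eqref{asymp}, \eqref{asympDiv}, \eqref{XihStab} (via the componentwise BDM competitor) and Theorem~\ref{errorE}, together with the standard $\L^2$-projection estimate for $Q_h$. Your explicit recovery of the zeroth-order term $\max_{[0,T]}\norm{(\br-\br_h)(t)}_{0,\Omega}$ by integrating $\dot{\be}_{\br,h}$ from the vanishing initial data \eqref{inti0} is precisely the step the paper leaves implicit, and it is carried out correctly.
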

\begin{proof}
It is a direct consequence of \eqref{asymp}, \eqref{asympDiv}, \eqref{XihStab}, and Theorem \ref{errorE}.
\end{proof}

We stress here that the quantity $\ddot{\bu}_h(t) := \tfrac{1}{\rho}\big(\bdiv \kp_h^+(t) + U_h \bF(t)\big)$ 
provides a direct and accurate approximation of the acceleration field $\ddot \bu$. Indeed, under the 
assumptions of Corollary~\ref{coro1},  the triangle inequality yields
\[
\max_{t\in [0, T]}\| (\ddot\bu - \ddot \bu_h)(t)\|_{0,\Omega}\lesssim \max_{t\in [0, T]}\|\bdiv (\kp - \kp_h)^+\|_{0,\Omega} + \max_{t\in [0, T]}\|\bF - U_h \bF\|_{0,\Omega} \lesssim h^k.
\]

\section{The fully discrete scheme and its convergence analysis}\label{section7}

Given $L\in \mathbb{N}$, we consider a uniform partition of the time interval $[0, T]$ with step size $\Delta t := T/L$. Then, for any continuous function $\phi:[0, T]\to \R$ and for each $k\in\{0,1,\ldots,L\}$, we denote $\phi^k := \phi(t_k)$, where $t_k := k\,\Delta t$. In addition, we adopt the same notation for vector/tensor valued functions and consider $t_{k+\frac{1}{2}}:= \frac{t_{k+1} + t_k}{2}$, $\phi^{k+\frac{1}{2}}:= \frac{\phi^{k+1} + \phi^k}{2}$, $\phi^{k-\frac{1}{2}}:= \frac{\phi^{k} + \phi^{k-1}}{2}$, and the discrete time derivatives
\[
\partial_t \phi^k := \frac{\phi^{k+1} - \phi^k}{\Delta t}, \quad \bar \partial_t \phi^k :=  \frac{\phi^{k} - \phi^{k-1}}{\Delta t}\quad\text{and} \quad \partial^0_t \phi^k := \frac{\phi^{k+1} - \phi^{k-1}}{2\Delta t}\,,
\]
from which we notice that 
\begin{align*}
\partial_t \bar \partial_t \phi^k  = \frac{\bar \partial_t \phi^{k+1} - \bar \partial_t \phi^k}{\Delta t} 
= \frac{\partial_t \phi^{k} - \partial_t \phi^{k-1}}{\Delta t}\,.
\end{align*}

In what follows we utilize the Newmark trapezoidal rule for the time discretization of \eqref{varFormR1-varFormR2-h}-\eqref{initial-R1-R2-h} which means that, for each $k=1,\ldots,L-1$, we look for $\kp_h^{k+1}\in \mathfrak S_h$ and $\br^{k+1}_h\in \mathbb{Q}_h$ such that
\begin{align}\label{fullyDiscretePb1-Pb2}
\begin{split}
 A\Big( \partial_t\bar \partial_t\kp^k + \tfrac{1}{\omega}\pi_1 \partial^0_t \kp^k_h , \kq\Big)  
+ (\partial_t\bar \partial_t \br^k_h,\kq^+) + \Big(\bF(t_k)+ \bdiv \big(\frac{\kp_h^{k+\frac{1}{2}} + \kp_h^{k-\frac{1}{2}}}{2}\big)^+,  \bdiv \kq^+  \Big)_\rho
&=  \mathbf 0 
\\[1ex]
\big(\bs,(\kp_h^{k+1} )^+\big) &= 0
\end{split}
\end{align}
for all $\kq\in \mathfrak{S}_h$ and $\bs\in \mathbb{Q}_h$.  In addition, for the sake of simplicity, we assume that the scheme \eqref{fullyDiscretePb1-Pb2} is initiated as in   \eqref{initial-R1-R2-h*}.
It is straightforward to realize that the functions $\be^k_{\kp, h}  := \varXi_h\kp(t_k) - \kp_h^k$ 
and $\be^k_{\br,h} := Q_h\br(t_k) - \br_h^k$ solve the equations
\begin{align}\label{projectedErrorEqk}
\begin{split}
 A\Big( \partial_t\bar \partial_t \be^k_{\kp, h} + \tfrac{1}{\omega}\partial^0_t \pi_1\be^k_{\kp, h} , \kq\Big)  
+ \big(\partial_t\bar \partial_t \be^k_{\br,h}, \kq^+\big) 
+ \Big(\bdiv \big( \dfrac{\be^{k+\frac{1}{2}}_{\kp, h}  +\be^{k-\frac{1}{2}}_{\kp, h} }{2}\big)^+,  \bdiv \kq^+ \Big)_\rho
&=  G^k(\kq)
\\[1ex]
(\bs,(\be^{k+1}_{\kp, h})^+ ) &= 0,
\end{split}
\end{align}
for all $\kq\in \mathfrak{S}_h$ and $\bs\in \mathbb{Q}_h$, where 
\[
G^k(\kq) :=A\big(\mathfrak{X}_1^k, \kq \big)  +(\bchi^k, \kq^+) + \big(\bdiv (\mathfrak{X}_2^k)^+,  \bdiv \kq^+\big)_\rho,
\]
with 
\[
\mathfrak{X}_1^k := \varXi_h\partial_t\bar \partial_t \kp(t_k) - \ddot\kp(t_k) +  \pi_1\big(\varXi_h\partial^0_t\kp(t_k) - \dot\kp(t_k)\big),
\qquad
\bchi^k = Q_h\partial_t\bar \partial_t  \br(t_k) - \ddot\br(t_k),
\]
and (thanks to \eqref{div0} and because $\bdiv \mathbb W_h \subset \mathbf U_h$) 
\[
\mathfrak{X}_2^k:= \dfrac{\kp(t_{k+1}) -2\kp(t_k) + \kp(t_{k-1})}{4}.
\]

Discrete techniques mimicking those used for the semi-discrete problem in Theorem~\ref{AprioriBound_h} permit to estimate the projected errors in terms of the consistency errors as follows.
\begin{lemma}\label{fullStability}
Assume that the solution of problem \eqref{varFormR1-varFormR2}-\eqref{initial-R1-R2} satisfies $\kp \in \mathcal C^2(\mathfrak L^2(\Omega))$ and $\br \in \mathcal C^2(\mathbb Q)$. Then, the following estimate holds true
\begin{align}\label{full4}
\begin{split}
\disp
\max_n  &\norm{\partial_t \be^n_{\kp, h}}_{0,\Omega} + \max_n\norm{\bdiv (\be^n_{\kp, h})^+ }_{0,\Omega} 
+ \max_n \norm{\partial_t \be^n_{\br,h}}_{0,\Omega} \\[1ex] 
& \lesssim  \max_n\norm{\mathfrak{X}_1^n}_{0,\Omega}  + \max_n\norm{\bchi^n}_{0,\Omega}
+ \max_n\norm{\bdiv (\partial_t  \mathfrak{X}_2^n)^+}_{0,\Omega} + \max_n\norm{\bdiv (\mathfrak{X}_2^n)^+}_{0,\Omega}.
\end{split}
\end{align}
\end{lemma}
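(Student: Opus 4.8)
The plan is to reproduce, at the discrete level, the energy argument that gives the semi-discrete bound \eqref{AprioriBound_h}, as announced before the statement. The natural test function is the centered velocity $\kq = \partial^0_t\be^k_{\kp, h}$. Since the constraint $(\bs,(\be^{k}_{\kp,h})^+)=0$ holds for every index $k$ and every $\bs\in\mathbb Q_h$, it also holds for $\partial^0_t\be^k_{\kp,h}$, so this test function lies in $\mathfrak S_{\text{sym},h}$; consequently the rotation contribution $(\partial_t\bar\partial_t\be^k_{\br,h},(\partial^0_t\be^k_{\kp,h})^+)$ vanishes, exactly as the rotation disappeared from the reduced formulation \eqref{varForm-h} in the continuous analysis. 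With this choice the first equation of \eqref{projectedErrorEqk} reduces to a scalar identity involving only $\be_{\kp,h}$.

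Next I would exhibit a telescoping discrete energy. Writing $\partial^0_t\be^k_{\kp,h}=\tfrac12(\bar\partial_t\be^{k+1}_{\kp,h}+\bar\partial_t\be^{k}_{\kp,h})$ and using the symmetry of $A$, the inertial term collapses to $\tfrac1{\Delta t}(K^{k+1}-K^k)$ with $K^k:=\tfrac12A(\bar\partial_t\be^k_{\kp,h},\bar\partial_t\be^k_{\kp,h})$. Likewise, since the Newmark stiffness is evaluated at the average $\tfrac12(\be^{k+\frac12}_{\kp,h}+\be^{k-\frac12}_{\kp,h})$, the symmetry of $(\cdot,\cdot)_\rho$ turns the stiffness term into $\tfrac1{\Delta t}(P^{k+1}-P^k)$, where $P^k$ is the $\rho$-weighted squared norm of $\bdiv\big(\tfrac12(\be^{k}_{\kp,h}+\be^{k-1}_{\kp,h})\big)^+$. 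The dissipative term $A(\tfrac1\omega\pi_1\partial^0_t\be^k_{\kp,h},\partial^0_t\be^k_{\kp,h})$ is nonnegative by positive definiteness of $\cV$ and $\omega\ge\omega_0$, so it can be discarded. Setting $\mathcal E^k:=K^k+P^k$, this yields $\tfrac{1}{\Delta t}(\mathcal E^{k+1}-\mathcal E^k)\le G^k(\partial^0_t\be^k_{\kp,h})$; summing over $k$ and multiplying by $\Delta t$, and using $\be^0_{\kp,h}=\mathbf 0$ together with the startup \eqref{initial-R1-R2-h*} to absorb $\mathcal E^1$, gives $\mathcal E^n\lesssim \Delta t\sum_{k}G^k(\partial^0_t\be^k_{\kp,h})+\mathcal E^1$.

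I expect the main obstacle to be the divergence part of the consistency functional, namely $\Delta t\sum_k\big(\bdiv(\mathfrak X_2^k)^+,\bdiv(\partial^0_t\be^k_{\kp,h})^+\big)_\rho$: it carries $\bdiv(\partial^0_t\be^k_{\kp,h})^+$, a discrete velocity divergence that the energy does \emph{not} control. The remedy is a discrete summation by parts in $k$, transferring the centered difference from $\be_{\kp,h}$ onto $\mathfrak X_2$. This produces a summed term of the form $-\Delta t\sum_k\big(\bdiv(\partial^0_t\mathfrak X_2^k)^+,\bdiv(\be^k_{\kp,h})^+\big)_\rho$, in which $\bdiv(\be^k_{\kp,h})^+$ is controlled by $\sqrt{\mathcal E^k}$, plus boundary contributions at the indices $k\in\{0,1\}$ and $k\in\{n-1,n\}$. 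This is the exact discrete analogue of the time integration by parts performed in \eqref{estimN0}, and it is precisely what makes $\max_n\norm{\bdiv(\partial_t\mathfrak X_2^n)^+}_{0,\Omega}$ (from the summed term) and $\max_n\norm{\bdiv(\mathfrak X_2^n)^+}_{0,\Omega}$ (from the endpoints) appear on the right-hand side of \eqref{full4}.

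To close, I would bound $\norm{\partial^0_t\be^k_{\kp,h}}_{0,\Omega}\lesssim \sqrt{\mathcal E^k}+\sqrt{\mathcal E^{k+1}}$ and $\norm{\bdiv(\cdot)^+}_{0,\Omega}\lesssim\sqrt{\mathcal E}$ through \eqref{ellipA} and $\rho$ being bounded below, use $\Delta t\sum_{k=1}^{n-1}\le T$, and apply Cauchy--Schwarz together with \eqref{contA} to arrive at $\max_n\mathcal E^n\lesssim R\,\max_n\sqrt{\mathcal E^n}+\mathcal E^1$, where $R$ denotes the right-hand side of \eqref{full4}. A Young-type absorption of $\max_n\sqrt{\mathcal E^n}$ then gives $\max_n\mathcal E^n\lesssim R^2$. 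The coercive lower bound of $\mathcal E^n$ now yields $\max_n\norm{\bar\partial_t\be^n_{\kp,h}}_{0,\Omega}$, hence $\max_n\norm{\partial_t\be^n_{\kp,h}}_{0,\Omega}$ after an index shift, and the control of the divergence of the Newmark average $\tfrac12(\be^n_{\kp,h}+\be^{n-1}_{\kp,h})$, from which the stated divergence term is recovered using $\be^0_{\kp,h}=\mathbf 0$. Finally, for $\max_n\norm{\partial_t\be^n_{\br,h}}_{0,\Omega}$ I would sum the first equation of \eqref{projectedErrorEqk} once in $k$ so as to express $(\partial_t\be^n_{\br,h},\kq^+)$ in terms of the already-controlled quantities, and then invoke the discrete inf-sup condition \eqref{discInfSup}, exactly as in \eqref{comb11} and \eqref{hart}.
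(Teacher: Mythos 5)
Your proposal reproduces the paper's proof essentially step for step: the same test function $\partial^0_t\be^k_{\kp,h}$ (whose discrete symmetry, inherited from the constraint and the vanishing initial errors, kills the rotation term), the same telescoping discrete energy with the nonnegative dissipation term discarded, the same Abel summation by parts transferring the time difference onto $\mathfrak{X}_2^k$ with the endpoint terms yielding $\max_n\norm{\bdiv(\mathfrak{X}_2^n)^+}_{0,\Omega}$, the same Young-type absorption, and the same once-summed equation plus discrete inf-sup condition \eqref{discInfSup} for $\partial_t\be^n_{\br,h}$. The one loose point --- recovering $\max_n\norm{\bdiv(\be^n_{\kp,h})^+}_{0,\Omega}$ from the half-integer averages $\bdiv\big(\be^{n+\frac{1}{2}}_{\kp,h}\big)^+$, which your alternating-sum argument from $\be^0_{\kp,h}=\mathbf{0}$ does not give with a constant uniform in $n$ --- is glossed in exactly the same way by the paper, whose proof in fact only establishes (and, in Theorem~\ref{mainFD}, only uses) the half-integer quantity of \eqref{full30}.
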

\begin{proof}
Taking  $\kq = \partial^0_t\be^{k}_{\kp, h}= \dfrac{ \be_{\kp, h}^{k+\frac{1}{2}} - \be_{\kp, h}^{k-\frac{1}{2}} }{\Delta t} = \dfrac{\partial_t \be_{\kp, h}^k + \partial_t \be_{\kp, h}^{k-1}}{2}$ in \eqref{projectedErrorEqk} yields the identity 
\begin{align*}
	\frac{1}{2 \Delta t} A\Big( \partial_t \be^k_{\kp,h} &- \partial_t \be^{k-1}_{\kp,h}, 
 \partial_t \be^k_{\kp,h} + \partial_t \be^{k-1}_{\kp,h} \Big) + 
  A\Big( \tfrac{1}{\omega}\pi_1\partial^0_t \be^k_{\kp, h}, \pi_1\partial^0_t \be^k_{\kp, h} \Big) 
 \\[1ex]
\disp
&+ \frac{1}{2 \Delta t} \Big(\bdiv \big(\be^{k+\frac{1}{2}}_{\kp,h} + 
\be^{k-\frac{1}{2}}_{\kp,h}\big)^+ , \bdiv \big(\be^{k+\frac{1}{2}}_{\kp,h} - 
\be^{k-\frac{1}{2}}_{\kp,h}\big)^+ \Big)_\rho = G^k(\partial^0_t \be^k_{\kp, h}),
\end{align*}
from which we obtain the estimate
\begin{equation*}
\begin{array}{c}
\disp
A\Big( \partial_t \be^k_{\kp,h}, 
 \partial_t \be^k_{\kp,h} \Big) -  A\Big( \partial_t \be^{k-1}_{\kp,h}, 
 \partial_t \be^{k-1}_{\kp,h} \Big)
   + \norm{\tfrac{1}{\sqrt{\rho}} \bdiv \big(\be^{k+\frac{1}{2}}_{\kp,h}\big)^+}_{0,\Omega}^2 \\[2ex]
   \disp
   - \norm{\tfrac{1}{\sqrt \rho} \bdiv \big(\be^{k-\frac{1}{2}}_{\kp,h}\big)^+}_{0,\Omega}^2  
   \leq  2 \Delta t G^k(\partial^0_t \be^k_{\kp, h}).
   \end{array}
\end{equation*}
Then, summing the foregoing inequality over $k=1,\ldots, n$ and using \eqref{ellipA} gives 
\begin{equation}\label{full2}
\norm{\partial_t \be^n_{\kp,h}}_{0,\Omega}^2 + \norm{ \bdiv \big(\be^{n+\frac{1}{2}}_{\kp,h}\big)^+}_{0,\Omega}^2 \lesssim \Delta t \sum_{k= 1}^n G^k(\partial^0_t \be^k_{\kp, h}).
\end{equation}
Performing a discrete integration by parts in the summation corresponding to the term containing $\mathfrak{X}_2$  on the right hand side of \eqref{full2}, we arrive at  
\begin{align*}
	\Delta t\sum_{k= 1}^n &G^k(\partial^0_t \be^k_{\kp, h}) = \Delta t\sum_{k= 1}^n A\Big(\mathfrak{X}_1^k, \frac{\partial_t \be_{\kp, h}^k + \partial_t \be_{\kp, h}^{k-1}}{2} \Big)  + \Delta t\sum_{k= 1}^n \Big(\bchi^k, \big(\frac{\partial_t \be_{\kp, h}^k + \partial_t \be_{\kp, h}^{k-1}}{2}\big)^+\Big)
	\\
	 &-  \Delta t\sum_{k= 1}^{n-1}\Big(\bdiv \partial_t(\mathfrak{X}_2^k)^+,  \bdiv \big( \be_{\kp, h}^{k+\frac{1}{2}} \big)^+\Big)_\rho +  \Big( \bdiv \big(\mathfrak{X}_2^n\big)^+, \bdiv \big( \be_{\kp, h}^{n+\frac{1}{2}} \big)^+\Big)_\rho.
\end{align*} 
Using this expression of the right hand side of \eqref{full2} and the Cauchy-Schwarz inequality we obtain by means of straightforward calculations the estimate 
\begin{equation}\label{full30}
\begin{array}{c}
\disp
\max_n\norm{\partial_t \be^n_{\kp,h}}_{0,\Omega} + \max_n\norm{\bdiv \be^{n+\frac{1}{2}}_{\kp,h}}_{0,\Omega} \lesssim  
\Delta t \sum_{k= 1}^L 
\norm{\mathfrak{X}_1^k}_{0,\Omega} + \Delta t \sum_{k= 1}^L \norm{\bchi^k}_{0,\Omega} 
\\[1ex]
\disp
+ \, \Delta t \sum_{k= 1}^{L} \norm{\bdiv (\partial_t  \mathfrak{X}_2^k)^+}_{0,\Omega} + 
\max_n\norm{\bdiv ( \mathfrak{X}_2^n)^+}_{0,\Omega}.
\end{array}
\end{equation}
It remains to obtain bounds in the $\L^2$-norm for the projected error in the variable $\br$. In fact, multiplying  the first equation of \eqref{projectedErrorEqk} by $\Delta t$ and summing over $k=1,\ldots,n$, we get
\begin{align*}
\disp
(\partial_t \be^{n+1}_{\br,h}, \btau) &=  -A\Big( \partial_t \be^{n+1}_{\kp, h}, \kq\Big) 
-   A\Big( \tfrac{1}{\omega} \frac{\be^{n+1}_{\kp, h}+\be^{n}_{\kp, h}}{2} , \pi_1\kq\Big) 
\\[1ex]
 &- \Delta t\sum_{k= 1}^n \Big(\bdiv \big( \dfrac{\be^{k+\frac{1}{2}}_{\kp, h}  +
\be^{k-\frac{1}{2}}_{\kp, h} }{2}\big)^+,  \bdiv \kq^+ \Big)_\rho + \Delta t\sum_{k= 1}^nG^k(\kq).
\end{align*}
Hence, by virtue of the inf-sup condition \eqref{discInfSup}, the Cauchy-Schwarz inequality, and \eqref{contA}, we have that 
\begin{align*}
	\disp 
\norm{\partial_t \be^n_{\br,h}}_{0,\Omega} \lesssim \sup_{\kq\in \mathfrak S_h} \frac{ (\partial_t \be^n_{\br,h}, \kq^+) }{\norm{\kq}_{\mathfrak S}} \lesssim  \max_n\norm{\partial_t \be^n_{\kp,h}}_{0,\Omega} + \max_n\norm{\be^n_{\kp,h}}_{0,\Omega} +\max_n\norm{\bdiv \big(\be_{\kp,h}^{n+\frac{1}{2}}\big)^+}_{0,\Omega} 
\\[1ex]
\disp
+ 	\, \max_n\norm{\mathfrak{X}_1^n}_{0,\Omega} + \max_n\norm{\bchi^n}_{0,\Omega}
+  \max_n\norm{\bdiv (\partial_t  \mathfrak{X}_2^n)^+}_{0,\Omega} + 
\max_n\norm{\bdiv (\mathfrak{X}_2^n)^+}_{0,\Omega},
\end{align*}
and the result follows from \eqref{full30} and the fact that 
\[
\norm{\be^n_{\kp,h}}_{0,\Omega} \leq \Delta t \sum_{k=1}^n \norm{\bar \partial_t \be^k_{\kp,h}}_{0,\Omega}\leq \max_k\norm{\partial_t \be^k_{\kp,h}}_{0,\Omega}\quad \forall n=1\ldots, L.
\]
\end{proof}

The stability estimate obtained in \eqref{full4} for the projected errors and  Taylor expansions 
for the different consistency terms, provide  the following quasi-optimal convergence result. 
\begin{theorem}\label{mainFD}
Assume that the solution of problem \eqref{varFormR1-varFormR2}-\eqref{initial-R1-R2} is such that 
$\kp\in \cC^4(\mathfrak S)$ and $\br\in \cC^4(\mathbb{L}^2(\Omega))$. Then, it holds that 
\begin{align}\label{fullConv1}
\begin{split}
&\max_n\norm{\kp(t_{n+\frac{1}{2}}) - \kp^n_{h}}_{0,\Omega}+
 \max_n
\norm{ \bdiv\big(\kp(t_{n+\frac{1}{2}}) - \kp^{n+\frac{1}{2}}_h\big)^+}_{0,\Omega} + 
\max_n \norm{\br(t_{n+\frac{1}{2}}) - \br^n_{h}}_{0,\Omega}
\\[1ex]
&\lesssim  
\norm{\kp - \varXi_h\kp}_{\W^{2,\infty}(\mathfrak S)} + 
\norm{\br - Q_h\br}_{\W^{2,\infty}(\mathbb{L}^2(\Omega))} + 
\Delta t^2 \Big( \norm{\bsig}_{\W^{4,\infty}(\mathfrak S)} + \norm{\br}_{\W^{4,\infty}(\mathbb L^2(\Omega))} \Big).
\end{split}
\end{align}
\end{theorem}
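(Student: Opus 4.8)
The plan is to bound the full error by inserting the elliptic projection $\varXi_h\kp$ and the $\bbL^2(\Omega)$-projection $Q_h\br$, thereby reducing everything to the discrete stability estimate \eqref{full4} of Lemma~\ref{fullStability} for the projected errors $\be^k_{\kp,h}$, $\be^k_{\br,h}$, together with Taylor expansions of the exact solution. For each of the three quantities on the left of \eqref{fullConv1} I would expand about the half-integer node: writing, for instance,
\begin{equation*}
\kp(t_{n+\frac{1}{2}}) - \kp^{n+\frac{1}{2}}_h = \Big(\kp(t_{n+\frac{1}{2}}) - \tfrac{1}{2}\big(\kp(t_{n+1})+\kp(t_n)\big)\Big) + \tfrac{1}{2}\big((\kp-\varXi_h\kp)(t_{n+1})+(\kp-\varXi_h\kp)(t_n)\big) + \be^{n+\frac{1}{2}}_{\kp,h},
\end{equation*}
and analogously for $\bdiv(\,\cdot\,)^+$ and for $\br$. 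The first summand is a purely temporal remainder, equal to $-\tfrac{\Delta t^2}{8}\ddot\kp(\xi_n)$ by Taylor's theorem and hence $\mathcal{O}(\Delta t^2)$ by the $\cC^2$-regularity implied by $\kp\in\cC^4(\mathfrak S)$; the second is an average of spatial projection errors bounded by $\norm{\kp-\varXi_h\kp}_{\W^{2,\infty}(\mathfrak S)}$ (respectively $\norm{\br-Q_h\br}_{\W^{2,\infty}(\mathbb{L}^2(\Omega))}$); and the third is the projected error furnished by \eqref{full4}. To pass from the discrete velocity $\partial_t\be^n_{\kp,h}$ actually controlled in \eqref{full4} to the $\bbL^2$-size of $\be^{n+\frac{1}{2}}_{\kp,h}$ I would use the telescoping identity $\be^n_{\kp,h}=\Delta t\sum_{k=0}^{n-1}\partial_t\be^k_{\kp,h}$ together with the vanishing initial data, exactly as at the end of the proof of Lemma~\ref{fullStability}.

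It then remains to estimate the consistency terms $\mathfrak{X}_1^k$, $\bchi^k$ and $\mathfrak{X}_2^k$ driving the right-hand side of \eqref{full4}, by separating in each a temporal finite-difference error from a spatial projection error. For $\mathfrak{X}_1^k$ I would insert $\varXi_h\ddot\kp(t_k)$ and $\varXi_h\dot\kp(t_k)$ to write
\begin{equation*}
\mathfrak{X}_1^k = \varXi_h\big(\partial_t\bar\partial_t\kp(t_k)-\ddot\kp(t_k)\big) + \pi_1\varXi_h\big(\partial^0_t\kp(t_k)-\dot\kp(t_k)\big) + (\varXi_h-\mathrm{Id})\ddot\kp(t_k) + \pi_1(\varXi_h-\mathrm{Id})\dot\kp(t_k),
\end{equation*}
where, by \eqref{idempoi}, the last two summands are the second and first time-derivatives of the projection error $\varXi_h\kp-\kp$ and are thus controlled by $\norm{\kp-\varXi_h\kp}_{\W^{2,\infty}(\mathfrak{L}^2(\Omega))}$, while the first two are classical centred-difference remainders: Taylor's theorem gives $\partial_t\bar\partial_t\kp(t_k)-\ddot\kp(t_k)=\mathcal{O}(\Delta t^2\,\norm{\kp^{(4)}})$ and $\partial^0_t\kp(t_k)-\dot\kp(t_k)=\mathcal{O}(\Delta t^2\,\norm{\kp^{(3)}})$, made uniform by the $\cC^4$-regularity and by the $\mathfrak S$-boundedness of $\varXi_h$ (a consequence of \eqref{XihStab}). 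The same recipe splits $\bchi^k$ into $Q_h(\partial_t\bar\partial_t\br(t_k)-\ddot\br(t_k))$, again $\mathcal{O}(\Delta t^2)$, plus the projection error $(Q_h-\mathrm{Id})\ddot\br(t_k)$. For the remaining terms I would exploit the identity $\mathfrak{X}_2^k=\tfrac{\Delta t^2}{4}\,\partial_t\bar\partial_t\kp(t_k)$, whence $\bdiv(\mathfrak{X}_2^k)^+=\mathcal{O}(\Delta t^2)$ from the $\cC^2$-bound on $\bdiv\kp^+$.

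The one point that I expect to require genuine care, and therefore the main obstacle, is the term $\norm{\bdiv(\partial_t\mathfrak{X}_2^n)^+}_{0,\Omega}$ in \eqref{full4}: a naive power count suggests that differencing the second difference $\mathfrak{X}_2^k$ once more in time might cost a factor of $\Delta t$. This is resolved by noting that $\partial_t\mathfrak{X}_2^k=\tfrac{\Delta t^2}{4}\,\partial_t\big(\partial_t\bar\partial_t\kp(t_k)\big)$ is $\tfrac{\Delta t^2}{4}$ times a third-order divided difference of $\kp$, which converges to the third time-derivative $\kp^{(3)}$; hence $\bdiv(\partial_t\mathfrak{X}_2^k)^+$ stays $\mathcal{O}(\Delta t^2)$ precisely because $\bdiv\kp^+\in\cC^3$, as guaranteed by $\kp\in\cC^4(\mathfrak S)$. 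Collecting the temporal remainders (all $\mathcal{O}(\Delta t^2)$ and carrying $\W^{4,\infty}$-norms of $\kp$ and $\br$) and the spatial $\W^{2,\infty}$ projection errors, and feeding them through \eqref{full4} and the three-term splittings of the first paragraph, would yield \eqref{fullConv1}. Finally, I note that the projection errors could be converted into explicit rates $h^k$ through \eqref{asymp}, \eqref{asympDiv} and \eqref{XihStab}, just as in Corollary~\ref{coro1}, although the stated estimate leaves them abstract.
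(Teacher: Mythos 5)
Your proposal is correct and follows essentially the same route as the paper: both reduce the theorem to the stability estimate \eqref{full4} of Lemma~\ref{fullStability} and then control the consistency terms $\mathfrak{X}_1^k$, $\bchi^k$, $\mathfrak{X}_2^k$, $\partial_t\mathfrak{X}_2^k$ by the same Taylor expansions (your splitting of $\mathfrak{X}_1^k$ into centred-difference remainders plus projection errors, and your identification of $\partial_t\mathfrak{X}_2^k$ as $\tfrac{\Delta t^2}{4}$ times a third divided difference requiring only $\cC^3$-regularity of $\bdiv\kp^+$, match \eqref{chi1} and \eqref{diffchi2} exactly). The only deviation is the endgame: the paper first bounds the velocity errors $\dot\kp(t_{n+\frac{1}{2}}) - \partial_t\kp^n_h$ (estimate \eqref{dos}) and then passes to the position errors via the summation identity \eqref{truco}, whereas you telescope $\be^n_{\kp,h} = \Delta t\sum_k \partial_t\be^k_{\kp,h}$ with the vanishing initial data and absorb the midpoint average $\kp(t_{n+\frac{1}{2}}) - \tfrac{1}{2}\big(\kp(t_{n+1})+\kp(t_n)\big)$ by a direct Taylor remainder --- a marginally more elementary bookkeeping that yields the same bound.
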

\begin{proof}
It follows from the triangle inequality and  the stability estimate \eqref{full4} that 
\begin{align}\label{uno}
\begin{split}
\max_n &\norm{\dot{\kp}(t_{n+\frac{1}{2}}) - \partial_t \kp^n_{h}}_{0,\Omega} + 
\max_n\norm{\bdiv \big( \kp(t_{n+\frac{1}{2}}) - \kp^{n+\frac{1}{2}}_h \big)^+}_{0,\Omega} + 
\max_n \norm{\dot{\br}(t_{n+\frac{1}{2}}) - \partial_t \br^n_{h}}_{0,\Omega}
\\[1ex]
&\leq \max_n \norm{\dot{\kp}(t_{n+\frac{1}{2}}) - \varXi_h\partial_t \kp(t_n) }_{0,\Omega} + 
\max_n\Big\|\bdiv\Big ( \kp(t_{n+\frac{1}{2}}) -  \varXi_h\frac{\kp(t_{n+1}) + \kp(t_{n}) }{2}\Big)^+ \Big\|_{0,\Omega}
\\[1ex]
&\quad +\max_n \norm{\dot{\br}(t_{n+\frac{1}{2}}) - Q_h\partial _t \br(t_n) }_{0,\Omega}  +\max_n  \norm{\partial_t \be^n_{\kp,h}}_{0,\Omega} + \max_n\norm{\bdiv \be^{n+\frac{1}{2}}_{\kp,h}}_{\rho} 
\\[1ex]
& \quad + \max_n  \norm{\partial_t \be^n_{\br,h}}_{0,\Omega} \lesssim 
\max_n \norm{\dot{\kp}(t_{n+\frac{1}{2}}) - \varXi_h\partial_t \kp(t_n) }_{0,\Omega} 
\\[1ex]
&\quad + \max_n \norm{\dot{\br}(t_{n+\frac{1}{2}}) - Q_h\partial _t \br(t_n) }_{0,\Omega} + 
\max_n\Big\|\bdiv\Big ( \kp(t_{n+\frac{1}{2}}) -  \varXi_h\frac{\kp(t_{n+1}) + \kp(t_{n}) }{2}\Big)^+ \Big\|_{0,\Omega}
\\[1ex]
&\quad +  
\max_n\norm{\mathfrak{X}_1^n}_{0,\Omega} + \max_n\norm{\bchi^n}_{0,\Omega} + \max_n\norm{\bdiv (\partial_t  \mathfrak{X}_2^n)^+}_{0,\Omega} + \max_n\norm{\bdiv (\mathfrak{X}_2^n)^+}_{0,\Omega}.
\end{split}
\end{align}
Centering the following Taylor expansions at $t=t_n$ gives  
\begin{align}\label{chi1}
\begin{split}
\mathfrak{X}_1^n = \varXi_h\ddot{\kp}(t_n) &- \ddot{\kp}(t_n) + 
\frac{\Delta t^2}{6 } \int_{-1}^{1} (1-|s|)^3\varXi_h\dfrac{\text{d}^4 \kp}{\text{d}t^4}(t_n + \Delta t\, s)  \, \text{d}s  
\\
 &+ \pi_1\Big( \varXi_h\dot{\kp}(t_n) - \dot{\kp}(t_n) \Big)+ \frac{\Delta t^2}{2 } \int_{-1}^{1} (1 - |s|)^2 \pi_1\varXi_h\dfrac{\text{d}^3\, \kp}{\text{d}t^3}(t_n + \Delta t\, s)\, \text{d}s \,,
 \end{split}
\end{align}
\begin{equation}\label{nuevo}
\bchi^n = Q_h\ddot{\br}(t_n) - \ddot{\br}(t_n) + 
\frac{\Delta t^2}{6 } \int_{-1}^{1} (1-|s|)^3Q_h\dfrac{\text{d}^4 \br}{\text{d}t^4}(t_n + \Delta t\, s)
 \, \text{d}s \,,
\end{equation}
\begin{equation}\label{chi2}
\mathfrak{X}_2^n = \frac{\Delta t^2}{4} \int_{-1}^{1} (1 - |s|) \ddot{\kp}(t_n + \Delta t\, s)  \, \text{d}s\,,
\end{equation}
and
\begin{align}\label{diffchi2}
\begin{split}
\partial_t  \mathfrak{X}_2^k &= \frac{\kp(t_{n+2}) - 3 \kp(t_{n+1}) + 3\kp(t_{n}) -\kp(t_{n-1})}{4\Delta t}
\\
&= \Delta t^2 \int_0^1 (1-s)^2  \Big( \dfrac{\text{d}^3 \kp}{\text{d}t^3} (t_n + 2\Delta t\, s) -\dfrac{3}{8}\dfrac{\text{d}^3 \kp}{\text{d}t^3} (t_n + \Delta t\, s) + \dfrac{1}{8}\dfrac{\text{d}^3 \kp}{\text{d}t^3} (t_n - \Delta t\, s)\Big) \, \text{d}s.
\end{split}
\end{align}
Expanding this time about $t=t_{n+\frac{1}{2}}$ gives
\begin{equation}\label{Taylor1}
\kp(t_{n+\frac{1}{2}}) -  \varXi_h\frac{\kp(t_{n+1}) + \kp(t_{n}) }{2} = \kp(t_{n + \frac{1}{2}}) - \varXi_h\kp(t_{n + \frac{1}{2}}) - 
\frac{\Delta t^2}{4} 
\int_{-1}^{1} (1-|s|)\varXi_h\ddot{\kp}(t_{n+\frac{1}{2}} + \frac{\Delta t}{2}s)\, \text{d}s,
\end{equation}
\begin{equation}\label{Taylor2}
\dot{\kp}(t_{n+\frac{1}{2}}) - \varXi_h\partial _t \kp(t_n) = \dot{\kp}(t_{n + \frac{1}{2}}) - \varXi_h\dot{\kp}(t_{n + \frac{1}{2}}) - \frac{\Delta t^2}{8} \int_{-1}^{1} (1 - |s|)^2 \varXi_h\dfrac{\text{d}^3 \kp}{\text{d}t^3} (t_{n+\frac{1}{2}} + \frac{\Delta t}{2}s) \, \text{d}s,
\end{equation}
and
\begin{equation}\label{Taylor2r}
\dot{\br}(t_{n+\frac{1}{2}}) - \partial _t \br^*_h(t_n) = \dot{\br}(t_{n + \frac{1}{2}}) - Q_h\dot{\br}(t_{n + \frac{1}{2}}) - \frac{\Delta t^2}{8} \int_{-1}^{1} (1 - |s|)^2Q_h\dfrac{\text{d}^3 \br}{\text{d}t^3} (t_{n+\frac{1}{2}} + \frac{\Delta t}{2}s) \, \text{d}s.
\end{equation}
Using that $\varXi_h: \mathfrak S \to \mathfrak S_h$ and $Q_h: \mathbb Q \to \mathbb Q_h$ are uniformly bounded in $h$ and taking advantage of  \eqref{chi1}, \eqref{chi2} and \eqref{diffchi2},  we readily obtain the bound   
\begin{align}\label{consistency1}
\begin{split}
&\max_n\norm{\mathfrak{X}_1^n}_{0,\Omega} + \max_n\norm{\bchi^n}_{0,\Omega}
+  
\max_n\norm{\bdiv (\partial_t  \mathfrak{X}_2^n)^+}_{0,\Omega} + 
\max_n\norm{\bdiv (\mathfrak{X}_2^n)^+}_{0,\Omega}  
\\[1ex]
&\quad \lesssim  \norm{\br - Q_h \br}_{\W^{2,\infty}(\mathbb{L}^2(\Omega))}+\norm{\kp - \varXi_h\kp}_{\W^{2,\infty}(\mathfrak S)}+ 
 \Delta t^2 \Big( 
\norm{\br}_{\W^{4,\infty}(\mathbb{L}^2(\Omega))} + \norm{\kp}_{\W^{4,\infty}(\mathfrak S)}
\Big).
\end{split}
\end{align} 
On the other hand, \eqref{Taylor1}, \eqref{Taylor2} and \eqref{Taylor2r} yield 
\begin{align}\label{consistency2}
\begin{split}
\max_n &\norm{\dot{\kp}(t_{n+\frac{1}{2}}) - \varXi_h \partial _t\kp(t_n) }_{0,\Omega} + 
\max_n \norm{\dot{\br}(t_{n+\frac{1}{2}}) - Q_h\partial _t \br(t_n) }_{0,\Omega}
\\[1ex] 
&+\max_n\big\|\bdiv\Big ( \kp(t_{n+\frac{1}{2}}) -  \varXi_h\frac{\kp(t_{n+1}) + \kp(t_{n}) }{2}\Big)^+\big\|_{0,\Omega} 
\lesssim 
\norm{\kp - \varXi_h\kp}_{\W^{1,\infty}(\mathfrak S)}
\\ 
& +  \norm{\br - Q_h\br}_{\W^{1,\infty}(\H(\bdiv, \Omega))} +\Delta t^2 \Big( 
\norm{\br}_{\W^{3,\infty}(\mathbb{L}^2(\Omega))} + \norm{\bsig}_{\W^{3,\infty}(\mathfrak S)}
\Big).
\end{split}
\end{align} 
Combining \eqref{consistency1}, \eqref{consistency2} with \eqref{uno} we obtain
\begin{align}\label{dos}
\begin{split}
\max_n\norm{\dot{\kp}(t_{n+\frac{1}{2}}) - \partial_t \kp^n_{h}}_{0,\Omega}+
 \max_n
\norm{ \bdiv\big(\kp(t_{n+\frac{1}{2}}) - \kp^{n+\frac{1}{2}}_h\big)^+}_{0,\Omega} + 
\max_n \norm{\dot{\br}(t_{n+\frac{1}{2}}) - \partial_t \br^n_{h}}_{0,\Omega} \\
\lesssim 
\norm{\kp - \varXi_h\kp}_{\W^{2,\infty}(\mathfrak S)} + 
\norm{\br - Q_h\br}_{\W^{2,\infty}(\mathbb{L}^2(\Omega))} + \Delta t^2 \Big( \norm{\kp}_{\W^{4,\infty}(\mathfrak S)} + 
\norm{\br}_{\W^{4,\infty}(\mathbb{L}^2(\Omega))} 
\Big).
\end{split}
\end{align}
Finally, to obtain error estimates for $\kp(t_{n+\frac{1}{2}}) - \kp_h^{n+\frac{1}{2}}$ we sum the identity 
\begin{align}\label{truco}
\begin{split}
(\kp(t_{k+\frac{1}{2}}) &- \kp_h^{k+\frac{1}{2}} ) - (\kp(t_{k-\frac{1}{2}}) - \kp_h^{k-\frac{1}{2}}) = \kp(t_{k+\frac{1}{2}})
- \kp(t_{k-\frac{1}{2}}) - \frac{\Delta t}{2}(\dot{\kp}(t_{k+\frac{1}{2}}) + \dot{\kp}(t_{k-\frac{1}{2}}))
\\[1ex] 
&+ \frac{\Delta t}{2} (\dot{\kp}(t_{k-\frac{1}{2}}) - \partial_t \kp_h^{k-1}  )
  + 
\frac{\Delta t}{2} (\dot{\kp}(t_{k+\frac{1}{2}}) - \partial_t \kp_h^k  )
\\[1ex]
&= \frac{\Delta t^3}{16} \int_{-1}^{1} (s^2 -1)\frac{\text{d}^3\kp}{\text{d}t^3}(t_k + \frac{\Delta t}{2} s) \, \text{d}s + \frac{\Delta t}{2} (\dot{\kp}(t_{k-\frac{1}{2}}) - \partial_t \kp_h^{k-1}  )  + 
\frac{\Delta t}{2} (\dot{\kp}(t_{k+\frac{1}{2}}) - \partial_t \kp_h^k  )
\end{split}
\end{align}
over $k= 1,\ldots, n$ to deduce that
\begin{align*}
\max_{n}&\norm{\kp(t_{n+\frac{1}{2}}) - \kp_h^{n+\frac{1}{2}}}_{0,\Omega} 
\lesssim  \Delta t^2  \norm{\kp}_{\W^{3,\infty}(\mathfrak{L}^2(\Omega))}
 +\max_{n}\norm{\dot{\kp}(t_{n+\frac{1}{2}}) - \partial_t \kp^n_{h}}_{0,\Omega}. 
\end{align*}
Similar manipulations yield 
\begin{align*}
 \max_{n}\norm{\br(t_{n+\frac{1}{2}}) - \br_h^{n+\frac{1}{2}}}_{0,\Omega}
\lesssim  
  \Delta t^2 \norm{\br}_{\W^{3,\infty}(\mathbb{L}^2(\Omega))} 
 + \max_{n}\norm{\dot{\br}(t_{n+\frac{1}{2}}) - \partial_t \br^n_{h}}_{0,\Omega}.
\end{align*}
The result is now a direct consequence of the last two estimates and \eqref{dos}.
\end{proof}

Under adequate time and space regularity assumptions, we obtain the following asymptotic error estimate.
\begin{corollary}\label{coro1dt}
Assume that the solution of \eqref{varFormR1-varFormR2}-\eqref{initial-R1-R2} 
is such that  $\kp\in \cC^4(\mathfrak S)\cap \cC^2\big([\mathbb{H}^{k}(\Omega)]^2\big)$, $\bdiv \kp^+ \in \cC^2(\bH^{k}(\Omega))$ and $\br\in \cC^4(\mathbb{L}^2(\Omega))\cap \cC^2(\mathbb{H}^k(\Omega))$. Then there holds 
\begin{equation}\label{asympSDdt}
\max_n\norm{\kp(t_{n+\frac{1}{2}}) -  \kp^{n+\frac{1}{2}}_{h}}_{\mathfrak S} + 
 \max_n \norm{\br(t_{n+\frac{1}{2}}) -  \br^{n+\frac{1}{2}}_{h}}_{0,\Omega} \lesssim h^k + \Delta t^2.
\end{equation}
\end{corollary}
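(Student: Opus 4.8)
The plan is to read the result directly off the fully discrete convergence bound \eqref{fullConv1} of Theorem~\ref{mainFD}, after showing that each of the three groups of terms on its right-hand side is $O(h^k + \Delta t^2)$. First I would note that the left-hand side of \eqref{asympSDdt} is dominated by the left-hand side of \eqref{fullConv1}: since $\norm{\kq}_{\mathfrak S}^2 = \norm{\kq}_{0,\Omega}^2 + \norm{\bdiv \kq^+}_{0,\Omega}^2$, one has
\[
\norm{\kp(t_{n+\frac{1}{2}}) - \kp_h^{n+\frac{1}{2}}}_{\mathfrak S} \le \norm{\kp(t_{n+\frac{1}{2}}) - \kp_h^{n+\frac{1}{2}}}_{0,\Omega} + \norm{\bdiv\big(\kp(t_{n+\frac{1}{2}}) - \kp_h^{n+\frac{1}{2}}\big)^+}_{0,\Omega},
\]
so that, together with the rotation contribution, the whole left-hand side of \eqref{asympSDdt} is controlled by the quantity estimated in \eqref{fullConv1}. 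It therefore only remains to bound the projection errors and the temporal terms on the right.

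For the elliptic projection error I would combine the quasi-optimality \eqref{XihStab} with a concrete competitor. Choosing $\kq_h := (\Pi_h\bze, \Pi_h\bgam)$ for $\kp = (\bze,\bgam)$, the $\L^2$-parts are controlled by \eqref{asymp} with $m = k$, while the divergence part is handled by the commuting diagram property, which gives $\bdiv\big(\kp^+ - \Pi_h\kp^+\big) = \bdiv\kp^+ - U_h\bdiv\kp^+$ and hence the bound \eqref{asympDiv} with $m = k$; this yields $\norm{\kp - \varXi_h\kp}_{\mathfrak S} \lesssim h^k\big(\norm{\kp}_{\mathbb H^k(\Omega)^2} + \norm{\bdiv\kp^+}_{k,\Omega}\big)$. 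Since $\varXi_h$ commutes with time differentiation (cf. \eqref{idempoi}), applying the same estimate to $\dot\kp$ and $\ddot\kp$ upgrades this to $\norm{\kp - \varXi_h\kp}_{\W^{2,\infty}(\mathfrak S)} \lesssim h^k$ under the hypotheses $\kp \in \cC^2(\mathbb H^k(\Omega)^2)$ and $\bdiv\kp^+ \in \cC^2(\bH^k(\Omega))$. The rotation term is easier: the standard $\L^2$-projection estimate onto $\cP_{k-1}$ gives $\norm{\br - Q_h\br}_{0,\Omega} \lesssim h^k\norm{\br}_{k,\Omega}$, and again commuting $Q_h$ with $\tfrac{\mathrm d}{\mathrm dt}$ together with $\br \in \cC^2(\mathbb H^k(\Omega))$ yields $\norm{\br - Q_h\br}_{\W^{2,\infty}(\mathbb L^2(\Omega))} \lesssim h^k$.

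Finally, the temporal terms $\Delta t^2\big(\norm{\kp}_{\W^{4,\infty}(\mathfrak S)} + \norm{\br}_{\W^{4,\infty}(\mathbb L^2(\Omega))}\big)$ are $O(\Delta t^2)$, the factors being finite $h$-independent constants thanks to the assumed $\cC^4$-regularity in time. Assembling these bounds into \eqref{fullConv1} gives exactly $h^k + \Delta t^2$. I expect no genuine obstacle, as this is an \emph{assembly}-type corollary; the only point requiring a little care is the divergence component of the $\mathfrak S$-norm, which is why the commuting diagram property \eqref{div0}/\eqref{asympDiv} must be invoked in place of a naive interpolation estimate, so as to preserve the order $h^k$ rather than lose a power of $h$.
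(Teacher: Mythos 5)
Your proposal is correct and follows exactly the paper's own route: the paper proves this corollary in one line as "a direct consequence of \eqref{asymp}, \eqref{asympDiv}, \eqref{XihStab} and Theorem~\ref{mainFD}," and your argument simply fills in those same steps, including the key point of using the commuting diagram property to retain order $h^k$ in the divergence part of the $\mathfrak S$-norm. Your elaboration is in fact more detailed than the published proof, and no gap remains.
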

\begin{proof}
The result is a direct consequence of  \eqref{asymp}, \eqref{asympDiv}, \eqref{XihStab} and Theorem~\ref{mainFD}.
\end{proof}

 We remark  that,  as in the semi-discrete case, an accurate approximation of the acceleration field 
$\ddot{\bu} = \tfrac{1}{\rho}\big( \bF + \bdiv \kp^+\big)$ can be obtained by a local postprocessing  with 
\begin{equation}\label{aceleracion}
\ddot{\bu}_h^{n} \,:=\, \frac{1}{\rho}\,\bigg\{
\bdiv\Big( \dfrac{\kp_h^{n+1/2} + \kp_h^{n-1/2}}{2} \Big)^+ 
\,+\, U_h \bF(t_{n})\bigg\} \,,
\qquad n = 1, \ldots , L-1\,.
\end{equation}
It is straightforward to deduce from Corollary~\ref{coro1dt} that   
\[
\max_{1\leq n\leq L-1}\| \ddot\bu(t_n) - \ddot{\bu}_h^{n}\|_{0,\Omega}\lesssim  h^k + \Delta t^2.
\]

\section{Numerical results}\label{section8}

In this section  we show that the numerical rates of convergence delivered by the fully discrete scheme \eqref{fullyDiscretePb1-Pb2} are in accordance with the theoretical ones. For simplicity, we restrict our tests to two-dimensional model problems and assume that the medium is isotropic, namely, we assume that the tensors $\mathcal C$ and $\mathcal D$ are given by 
\[
\mathcal C \btau = 2\mu\btau + \lambda \tr(\btau)\boldsymbol{I} \quad\text{and}\quad
\mathcal D \btau = 2 a \mu\btau +  b \lambda \tr(\btau)\boldsymbol{I},
\]
with coefficients $\mu>0$, $\lambda>0$, $a>1$, and $b>1$. 

\paragraph{Convergence test.}
In the first example, we set $\Omega=(0,1)\times (0,1)$, $T= 1$ and consider an  homogeneous medium with $\rho = 1$, $\mu=\lambda=1$, $a=b=3$,  and with a constant relaxation time $\omega = 1$. We select the source $\bF$ in such a way that the exact solution is given by 
\begin{equation}\label{eq-sol-1}
\boldsymbol{u}(\boldsymbol{x},t)= \begin{pmatrix}
	(1- x_1)x_1^2 \sin(\pi x_2) \cos t 
	\\[1ex]
	(1+t) \sin(\pi x_1) \sin(\pi x_2)
\end{pmatrix}  \quad \forall \boldsymbol{x} := (x_1,x_2) \in \Omega, \quad \forall t \in [0,T].
\end{equation}
The data necessary to initiate \eqref{fullyDiscretePb1-Pb2} are deduced directly from the exact solution. 

The numerical results  presented in Table \ref{table1} correspond to a space discretization based on the second order AFW element for a sequence of nested uniform triangular meshes $\cT_h$ of $\Omega$.  For each mesh size $h$ we take $\Delta t = h$ and the individual relative errors produced by the fully discrete method  \eqref{fullyDiscretePb1-Pb2} are measured at the final time step as follows: 
\[
\texttt{e}_h(\kp) := \frac{\| \kp(t_{L-\frac{1}{2}}) - \kp^{L-\frac{1}{2}}_h \|_{\mathfrak S}}{\| \kp(t_{L-\frac{1}{2}}) \|_{\mathfrak S}},
\qquad 
\texttt{e}_h(\br) := \frac{\| \br(t_{L-\frac{1}{2}}) - \br^{L-\frac{1}{2}}_h \|_{0,\Omega}}{\| \br(t_{L-\frac{1}{2}}) \|_{0,\Omega}}\,,
\]
\[
{\texttt{e}}_h(\ddot\bu) := \frac{\| \ddot{\bu}(t_{L-1}) - \ddot{\bu}_h^{L-1} \|_{0,\Omega}}{\| \ddot{\bu}(t_{L-1}) \|_{0,\Omega}}\,,
\]
where  $(\kp,\br)$  and $(\kp^k_h,\br_h^k)\,, \, k =0, \ldots , L$, are the solutions of  \eqref{varFormR1-varFormR2} and  \eqref{fullyDiscretePb1-Pb2}, respectively. The approximation $\ddot{\bu}_h^{L-1}$ of the acceleration at $t=t_{L-1}$ is obtained from formula \eqref{aceleracion}. Additionally, we introduce the experimental rates of convergence
\[
\texttt{r}_h(\star) := \frac{\log(  \texttt{e}_h(\star)/ 
 \texttt{e}_{\widehat h}(\star) )}{\log (h/\widehat{h})} \qquad
 \forall\, \star \in \big\{ \kp, \br, \ddot\bu\big\}\,,
\]
where $\texttt{e}_h$ and ${\texttt{e}}_{\widehat h}$ are the errors corresponding to two consecutive triangulations with mesh sizes $h$ and $\widehat{h}$, respectively.  We observe there that the expected quadratic convergence rate of the error is attained in each variable. 

\begin{table}[ht]
	\centering
\small
		\begin{tabular}{l c c c c c c c c c c c c}
			\toprule
			$h = \Delta t$ & $\verb"e"_h(\kp)$ & $\verb"r"_h(\kp)$  & $\verb"e"_h(\textbf{r})$ & $\verb"r"_h(\textbf{r})$ & $\texttt{e}_h(\ddot\bu)$ & $\texttt{r}_h(\ddot\bu)$ \cr
			\toprule
			1/8   & 1.06e$-$02 & $-$ & 1.74e$-$02 & $-$ & 2.57e$+$01 & $-$ \cr
			
			1/16  & 2.44e$-$03 & 2.12 & 3.95e$-$03 & 2.14 & 6.48e$+$00 & 1.99 \cr
			
			1/32  & 5.89e$-$04 & 2.05 & 9.69e$-$04 & 2.03 & 1.63e$+$00 & 1.99 \cr
			
			1/64  & 1.46e$-$04 & 2.01 & 2.37e$-$04 & 2.03 & 4.19e$-$01 & 1.96 \cr
			
			1/128 & 3.61e$-$05 & 2.02 & 5.86e$-$05 & 2.01 & 1.06e$-$01 & 1.99 \cr
			\bottomrule
		\end{tabular}
\caption{Convergence history for the AFW element of second order with $\Delta t = h$ and coefficients $\mu= \lambda = 1$, $a=b=3$, $\rho=\omega=1$. The exact solution is given by \eqref{eq-sol-1}.}
\label{table1}
\end{table} 

\paragraph{Locking test.}
We point out that the mixed finite element method given in \cite{ggm-JSC-2017} for the elastodynamic problem (and which is extended here for viscoelasticity) was shown to be free from volumetric locking in the nearly incompressible case. Here, we carry out experiments to test the performance of the method for viscoelasticity when $\lambda >\!\! > \mu$. We maintain the same settings established in the former example and the same exact solution. We only change the values of the coefficients $\lambda$ and $\mu$ that are now chosen as $(\lambda, \mu) = (1.5\times 10^2, 3)$ in a first test and $(\lambda, \mu) = (1.5\times 10^4, 3)$ in a second one. Their corresponding Poisson's ratios are given by $\nu \simeq 0.49$ and $\nu \simeq 0.4999$, respectively.

We observe from Table~\ref{table2} that there is no degeneration  of the convergence rates  as  Poisson's ratio $\nu$ approaches the incompressible limit $0.5$. This seems to indicate  that the scheme \eqref{fullyDiscretePb1-Pb2} is inmune to locking phenomenon in the nearly incompressible case.

\begin{table}[!htb]
\centering
\small
\begin{tabular}{p{1.5cm} c c c c | c c c c} \toprule
  & \multicolumn{4}{c}{$\nu \simeq 0.49$}  &  \multicolumn{4}{c}{$\nu \simeq 0.4999$} 
 \\ \toprule
 $h = \Delta t$ & $\verb"e"_h(\kp)$ & $\verb"r"_h(\kp)$ & $\verb"e"_h(\textbf{r})$ & $\verb"r"_h(\textbf{r})$ & $\verb"e"_h(\kp)$ & $\verb"r"_h(\kp)$ & $\verb"e"_h(\textbf{r})$ & $\verb"r"_h(\textbf{r})$ 
 \\ \toprule
1/8  & 8.76e$-$03 & $-$ & 2.85e$-$02  & $-$ & 8.79e$-$03 & $-$ & 2.44e$+$00 & $-$ 
\\ 
1/16 & 1.78e$-$03 & 2.30 & 3.49e$-$03 & 3.03  & 1.78e$-$03 & 2.30 & 2.11e$-$01 & 3.53 
\\ 
1/32 & 4.33e$-$04 & 2.04 & 7.19e$-$04 & 2.28   & 4.34e$-$04 & 2.04 & 2.54e$-$02 & 3.06 
\\ 
1/64  & 1.07e$-$04 & 2.02 & 1.70e$-$04 & 2.08  & 1.07e$-$04 & 2.01 & 3.34e$-$03 & 2.92 
\\ 
1/128 & 2.65e$-$05 & 2.02 & 4.18e$-$05 & 2.03  & 2.65e$-$05 & 2.02 & 4.33e$-$04 & 2.95 
\\ \bottomrule 
\end{tabular}
\caption{Convergence history for the AFW element of second order with $h = \Delta t$ and coefficients $\mu = 3$, $a = b = 3$, $\rho = \omega = 1$. The results listed on the left and right sides of the table correspond to $\lambda = 1.5\times 10^2$ and $\lambda = 1.5\times 10^4$, respectively. The exact solution is given by \eqref{eq-sol-1}.}
\label{table2}
\end{table}

\paragraph{Piecewise constant relaxation time.} Real materials can be modelled by allowing different relaxation times in different parts of the viscoelastic body. In this experiment, we test the sensibility of our scheme regarding to jumps in the relaxation function $\omega(\boldsymbol{x})$. To this end, we let $\omega(\boldsymbol{x}) = 	\widehat{\omega}$ in $\Omega_1:=(0,1)\times (0, 1/2)$ and $\omega(\boldsymbol{x}) = 1$ in $\Omega_2:=(0,1)\times (1/2, 1)$ and fix the values $T=1$, $\rho=1$, $\mu=\lambda=1$, and $a=b=3$ for the rest of coefficients.  Non-homogeneous transmission conditions
\[
\ddot\bu|_{\Omega_2} - \ddot\bu|_{\Omega_1} = \boldsymbol{g}_1(\widehat{\omega}) \quad \text{and} \quad \kp^+|_{\Omega_2}\bn_{\Sigma}  - \kp^+|_{\Omega_1}\bn_{\Sigma} = \boldsymbol{g}_2(\widehat{\omega})  \quad \text{on $\Sigma= \partial \Omega_1\cap \partial \Omega_2$}
\]
are considered on the interface $\Sigma$, where $\bF(\widehat{\omega})$, $\boldsymbol{g}_1(\widehat{\omega})$, and $\boldsymbol{g}_2(\widehat{\omega})$ are chosen in such a way that the exact solution is still given by \eqref{eq-sol-1}. Here, $\bn_{\Sigma}$ stands for the unit normal vector on $\Sigma$ oriented towards $\Omega_2$. 

\begin{table}[!htb]
\centering
\small
\begin{tabular}{p{1cm} c c| c c | c c| c c | c c } \toprule
  & \multicolumn{2}{c}{$\widehat{\omega} = 100$}  &  \multicolumn{2}{c}{$\widehat{\omega} = 10$} 
  & \multicolumn{2}{c}{$\widehat{\omega} = 1$}  &  \multicolumn{2}{c}{$\widehat{\omega} = 0.1$} &  \multicolumn{2}{c}{$\widehat{\omega} = 0.05$}
 \\ \toprule
 ${h = \Delta t}$ & $\verb"e"_h(\kp)$ & $\verb"r"_h(\kp)$ & $\verb"e"_h(\kp)$ & $\verb"r"_h(\kp)$ & $\verb"e"_h(\kp)$ & $\verb"r"_h(\kp)$ & $\verb"e"_h(\kp)$ & $\verb"r"_h(\kp)$  & $\verb"e"_h(\kp)$ & $\verb"r"_h(\kp)$ 
 \\ \toprule
1/8  & 3.44e$-$02 & $-$ & 1.13e$-$02 & $-$ & 1.06e$-$02 & $-$ & 3.97e$-$02 & $-$ & 7.80e$-$02 & $-$
\\ 
1/16 & 7.26e$-$03 & 2.24 & 2.58e$-$03 & 2.13  & 2.44e$-$03 & 2.12 & 2.24e$-$02 & 0.83 & 6.48e$-$02 & 0.27
\\ 
1/32 & 1.03e$-$03 & 2.82 & 6.00e$-$04 & 2.11  & 5.89e$-$04 & 2.05 & 7.08e$-$03 & 1.66 & 2.44e$-$02 & 1.41
\\ 
1/64  & 2.53e$-$04 & 2.02 & 1.49e$-$04 & 2.00  & 1.46e$-$04 & 2.01 & 2.90e$-$03 & 1.29 & 1.09e$-$02 & 1.16 
\\ 
1/128 & 5.01e$-$05 & 2.34 & 3.70e$-$05 & 2.01  & 3.61e$-$05 & 2.02 & 1.09e$-$03 & 1.41 & 4.31e$-$03 & 1.35
\\ \bottomrule 
\end{tabular}
\caption{Individual relative errors for the AFW element of second order with  coefficients $\lambda = \mu = 1$, $a = b = 3$, and $\rho = 1$. The relaxation time $\omega$ is equal to $\widehat{\omega}$ in $\Omega_1$ and $1$ in $\Omega_2$. The exact solution is given by (8.1).}
\label{table3}
\end{table}

Table~\ref{table3} displays the convergence history of the variable $\kp$ for different values of $\widehat{\omega}$.  We notice that the quadratic convergence remains unaltered when we allow the material in  $\Omega_1$ to relax more quickly (by at least two orders of magnitude) with respect to the one represented by the upper half of $\Omega$. In turn, the convergence rate worsens when $\hat \omega$ is too small because our stability estimates depend on $\norm{\frac{1}{\omega}}_{\L^{\infty}(\Omega)}$. For this same reason, the method presented in this paper cannot deal with materials that are purely elastic in parts of the domain ($\omega(\boldsymbol{x})$ should  vanish  identically there). The important issue of elastic-viscoelastic composite structures will be addressed in a forthcoming work.

\end{document}